\title{ Semistability of Graph Products}
\author{Michael Mihalik}
\newtheorem{theorem}{Theorem}[section]
\newtheorem{lemma}[theorem]{Lemma}
\newtheorem{corollary}[theorem]{Corollary}
\newtheorem{remark}[theorem]{Remark}
\newcounter{claimnum}
\newcounter{definitionnum}
\newenvironment{definition}{\addvspace{12pt}\refstepcounter{definitionnum}
\noindent{\bf Definition \arabic{definitionnum}.}}{\par\addvspace{12pt}}
\newenvironment{proof}{\addvspace{12pt}\noindent{\bf Proof:}}{
$\Box$\par\addvspace{12pt}}
\newcounter{examplenum}
\date{\today}
\begin{document}

\maketitle

\begin{abstract}
A {\it graph product} $G$ on a graph $\Gamma$ is a group defined as follows: For each vertex $v$ of $\Gamma$ there is a corresponding  non-trivial group $G_v$. The group $G$ is the quotient of the free product of the $G_v$ by the commutation relations $[G_v,G_w]=1$ for all adjacent $v$ and $w$ in $\Gamma$. A finitely presented group $G$ has {\it semistable fundamental group at $\infty$} if for some (equivalently any) finite connected CW-complex $X$ with $\pi_1(X)=G$, the universal cover $\tilde X$ of $X$ has the property that any two proper rays in $\tilde X$ are properly homotopic. The class of finitely presented groups with semistable fundamental group at $\infty$  is known to contain many other classes of groups, but it is a 40 year old question as to whether or not all finitely presented groups have semistable fundamental group at $\infty$.  Our main theorem is a combination result. It states that if $G$ is a graph product on a finite graph $\Gamma$ and each vertex group is finitely presented, then $G$ has non-semistable fundamental group at $\infty$ if and only if there is a vertex $v$ of $\Gamma$ such that $G_v$ is not semistable, and the subgroup of $G$ generated by the vertex groups of vertices adjacent to $v$ is finite (equivalently $lk(v)$ is a complete graph and each vertex group of $lk(v)$ is finite). Hence if one knows which vertex groups of $G$ are not semistable and which are finite, then an elementary inspection of $\Gamma$ determines whether or not $G$ has semistable fundamental group at $\infty$. 
\end{abstract}

\section{Introduction} \label{intro}
Given a graph $\Gamma$ with vertex set $V(\Gamma)$, and a group $G_v$ for each $v\in V(\Gamma)$, the graph product for $(\Gamma, \{G_v\}_{v\in V(\Gamma)})$
is the quotient of the free product of the $G_v$ by the normal closure of the set of all commutators $[g,h]$ where $g$ and $h$ are elements of adjacent vertex groups. Every right angled Coxeter group and right Angled Artin group is a graph product where vertex groups are copies of $\mathbb Z_2$ and $\mathbb Z$ respectively. As an immediate corollary to our theorem we have that all right angled Artin and Coxeter groups have semistable fundamental group at $\infty$ (a result first proved in \cite{M96}).

The question of whether or not all finitely presented groups have semistable fundamental group at $\infty$ has been studied for over 40 years and is one of the premier questions in the asymptotic theory of finitely presented groups. If a finitely presented group $G$ has semistable fundamental group at $\infty$ then $H^2(G;\mathbb ZG)$ is free abelian. The question of whether or not all finitely presented groups $G$ are such that $H^2(G;\mathbb ZG)$ is free abelian is attributed to H. Hopf and remains unanswered. Semistability is a quasi-isometry invariant of a group \cite{BR93}  or \cite{G} and the class of groups with semistable fundamental group at $\infty$ contains many classes of groups, including: Word hyperbolic groups (combining work of Bestvina-Mess \cite{BM91}, Bowditch \cite{Bow99B} and \cite{Swarup}), 1-relator groups \cite{MT92}, general Coxeter and Artin groups, most solvable groups \cite{M4}  and various group extensions \cite{M6} and ascending HNN extensions  \cite{M7}.

There are two important combination results, both of which are used in the  proof of our main theorem: 

\begin{theorem}\label{MT}(M. Mihalik, S. Tschantz \cite{MT1992}) 
If $G$ is the amalgamated product $A\ast_CB$ where $A$ and $B$ are finitely presented with semistable fundamental group at $\infty$, and $C$ is finitely generated, then $G$ has semistable fundamental group at $\infty$. 
\end{theorem}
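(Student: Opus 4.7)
The plan is to realize $G = A *_C B$ as the fundamental group of a finite CW complex $X$ built from finite CW complexes for $A$ and $B$ together with a mapping cylinder for the amalgamation. Explicitly, let $X_A$ and $X_B$ be finite CW complexes with $\pi_1(X_A)=A$ and $\pi_1(X_B)=B$. Choose a finite generating set of $C$ and represent its image in $A$ (respectively $B$) by edge loops in $X_A$ (respectively $X_B$) based at a vertex. Form $X$ by attaching a product $Y\times [0,1]$, where $Y$ is a finite wedge of circles indexed by the chosen generators of $C$, to $X_A\sqcup X_B$ using the two loop systems on $Y\times\{0\}$ and $Y\times\{1\}$. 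Then $\pi_1(X)=G$, and the universal cover $\tilde X$ inherits a tree-of-spaces decomposition over the Bass--Serre tree $T$ of $G$: vertex spaces are translates of $\tilde X_A$ and $\tilde X_B$, glued along edge spaces that are copies of $\tilde Y\times [0,1]$, where $\tilde Y$ is the cover of $Y$ corresponding to the subgroup $C$.

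Given two proper rays $r_0, r_1$ in $\tilde X$ based at a common vertex, the goal is to construct a proper homotopy between them. Projecting each $r_i$ to $T$ by collapsing every vertex space, one can show that, after a small proper homotopy of $r_i$, the image is either eventually constant at a vertex of $T$ or tends to a single end of $T$. If both $r_i$ eventually stay in a single vertex space, the problem reduces to proper homotopy of rays in some $\tilde X_A$ or $\tilde X_B$ and follows from the semistability of $A$ or $B$. If the images tend to distinct ends of $T$, a single edge-space traversal lets us reduce to the previous case, since each edge space is path connected. The substantive case is when both $r_i$ exit through a common end $\xi$ of $T$.

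In that case, let $e_1,e_2,\ldots$ be the edges of the geodesic ray in $T$ from the base vertex to $\xi$, and let $E_n$ denote the corresponding edge space in $\tilde X$. For large $n$, both rays cross $E_n$ transversally at points $p_n\in r_0\cap E_n$ and $q_n\in r_1\cap E_n$. The plan is to choose, for each such $n$, a connecting path $\sigma_n$ in $E_n$ from $p_n$ to $q_n$, and to fill the resulting rectangle (vertical sides sub-arcs of $r_0$ and $r_1$, horizontal sides $\sigma_n$ and $\sigma_{n+1}$) by a disk inside the vertex space lying between $e_n$ and $e_{n+1}$. The filling step is precisely an invocation of semistability of $A$ or $B$ for that vertex space. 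The existence of the $\sigma_n$ exploits finite generation of $C$: the edge space $E_n$ is a product $\tilde Y\times [0,1]$ whose component containing $p_n$ and $q_n$ is connected with path-length controlled by word length in $C$. The hard part, which is where the proof truly lives, is verifying \emph{properness} of the assembled homotopy: the $\sigma_n$ and all the filling disks must jointly escape every compact subset of $\tilde X$. This forces a careful diagonal construction that alternates between selecting $\sigma_n$ in edge spaces and closing up fillings in vertex spaces on successively larger compacta, with a quantitative (exhaustion) form of semistability in $A$ and $B$ ensuring that each filling can be pushed outside a prescribed compact set provided the input arcs already are. The main obstacle is the simultaneous control of horizontal escape (from finite generation of $C$) and vertical escape (from semistability of the vertex groups) at every stage.
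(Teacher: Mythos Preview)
The paper does not prove this theorem; it is quoted from \cite{MT1992} and used as a black box (the author even remarks in \S\ref{FG} that the original proof is ``a long technical argument''). So there is nothing in the present paper to compare your argument against line by line. That said, your plan is indeed the standard Bass--Serre tree-of-spaces strategy underlying the original Mihalik--Tschantz proof, and you have correctly located the genuine difficulty: arranging the connecting arcs $\sigma_n$ and the vertex-space fillings so that the assembled homotopy is \emph{proper}.

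A few points where your sketch would need real work or correction. First, you should begin by assuming $r_0$ and $r_1$ converge to the same end of $\tilde X$; semistability only asks for a proper homotopy in that case, and without it your ``distinct ends of $T$'' case is not a reduction at all (two rays going to distinct ends of $T$ are separated by a compact edge space and hence lie in different ends of $\tilde X$). Second, the claim that after a small proper homotopy each $r_i$ projects in $T$ to something eventually constant or converging to an end is not automatic: a proper ray can oscillate in $T$, and taming this requires an argument. Third, in the ``both rays eventually in one vertex space'' case you cannot simply invoke semistability of $A$ or $B$: the two rays may converge to the same end of $\tilde X$ but to different ends of that vertex space, so you first need to move them to the same end of the vertex space, which already uses the edge-space/vertex-space interaction. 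Finally, the ``quantitative (exhaustion) form of semistability'' you allude to is exactly what drives the original proof, but making it precise and compatible with the edge-space crossings is where the length and technicality of \cite{MT1992} comes from; your outline does not yet supply that mechanism.
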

The next result implies the previous one if $A$ and $B$ are 1-ended and $C$ is infinite.
  
\begin{theorem}\label{MM} (M. Mihalik \cite{M4}) 
Suppose  $A$ and $B$ are 1-ended finitely generated subgroups of the finitely generated group $G$ such that $A$ and $B$ have semistable fundamental group at $\infty$, the set $A\cup B$ generates $G$ and the group $A\cap B$ contains a finitely generated infinite subgroup. Then $G$ has semistable fundamental group at $\infty$. 
\end{theorem}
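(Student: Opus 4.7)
The plan is to work combinatorially in a Cayley-style $2$-complex of $G$, using the $1$-ended semistable pieces $A$ and $B$ as local building blocks that are glued along cosets of the infinite finitely generated subgroup $H \leq A \cap B$. Fix a finite generating set of $G$ containing generating sets for $A$, $B$, and $H$, and form a $2$-complex $X$ with $\pi_1(X) = G$ containing subcomplexes $Y_A$, $Y_B$, $Y_H$ modeling $A$, $B$, $H$ respectively, with $Y_H \subset Y_A \cap Y_B$. In the universal cover $\tilde X$, the translates $g\tilde Y_A$ and $g\tilde Y_B$ cover $\tilde X$ (because $A \cup B$ generates $G$) and their overlaps contain translates of $\tilde Y_H$.

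Given two proper rays $r, s$ in $\tilde X$ from the basepoint, the first step is to put each into an alternating canonical form $\alpha_1 \beta_1 \alpha_2 \beta_2 \cdots$, where $\alpha_i$ lies in some translate $g_i \tilde Y_A$, $\beta_i$ lies in some translate $h_i \tilde Y_B$, and consecutive pieces share an endpoint inside a common $H$-translate. To produce this form, write the edge path of $r$ in terms of the generators partitioned into $A$-generators and $B$-generators, giving alternating $A$-segments and $B$-segments in the obvious way. Then use $1$-endedness together with semistability of $A$ to proper-homotope each $\alpha_i$ inside $g_i \tilde Y_A$ so that its endpoints are pushed into the $H$-translates meeting the adjacent $B$-pieces, and symmetrically for each $\beta_i$. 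This maneuver rests on the standard fact that in a $1$-ended semistable space, any proper ray is properly homotopic to one threading through a prescribed infinite finitely generated subspace; the infiniteness of $H$ is precisely what makes this possible.

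With both $r$ and $s$ in canonical form, I proceed inductively: within $g_i \tilde Y_A$, use semistability of $A$ to proper-homotope the $A$-piece of $r$ to that of $s$, and likewise for each $\beta_i$ in the appropriate $B$-translate. Consecutive local homotopies agree on an $H$-translate, so they concatenate continuously. The main obstacle, and the real content of the argument, is ensuring the assembled homotopy is \emph{proper}, not merely continuous: each local homotopy is proper inside its own translate, but one must arrange that, as $i \to \infty$, the image of the $i$th local homotopy exits every compact subset of $\tilde X$. This requires bookkeeping analogous to the proof of Theorem \ref{MT}, with the $H$-overlaps playing the role that $C$ plays in the amalgamated product $A \ast_C B$, and with $1$-endedness of $A$ and $B$ used repeatedly to route the $H$-transitions far from the basepoint so that properness propagates cofinally along the decomposition.
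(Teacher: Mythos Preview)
The paper does not contain a proof of Theorem~\ref{MM}; the result is quoted from \cite{M4} and used as a black box in the proof of the main theorem (specifically, to show that $\langle st(v)\cup st(w)\rangle$ is semistable near the end of \S\ref{PF}). There is therefore no proof in this paper against which to compare your attempt.

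That said, your sketch imports the architecture of the amalgamated-product argument (Theorem~\ref{MT}) more directly than the hypotheses justify. In $A\ast_C B$ the Bass--Serre tree provides a canonical decomposition of $\tilde X$ into $A$-regions and $B$-regions meeting only along $C$-coset subcomplexes, and normal forms give every edge path a well-defined alternating structure. Under the hypotheses of Theorem~\ref{MM} there is no tree and no normal form: $G$ is merely generated by $A\cup B$, translates $g\tilde Y_A$ and $h\tilde Y_B$ may overlap in complicated ways, and the endpoint of an ``$A$-segment'' of a ray need not lie in any distinguished $H$-coset. Your ``canonical form'' step---pushing endpoints of $\alpha_i$ into ``the $H$-translate meeting the adjacent $B$-piece''---presupposes exactly the separation structure that is missing here. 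You also acknowledge that properness of the assembled homotopy is ``the real content'' but then do not supply it. So as written this is a plausible heuristic, not a proof; to see how the lack of tree structure is actually handled you will need to consult \cite{M4}.
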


Our main theorem follows:
\begin{theorem}\label{main} 
Suppose $G$ is a graph product on the finite connected graph $\Gamma$ where each vertex group is finitely presented. Then $G$ does not have semistable fundamental group at $\infty$ if and only if there is a vertex $v$ of $\Gamma$ such that: 

(1) $G_v$ does not have semistable fundamental group at $\infty$ and  

(2) the link of $v$ is a complete graph with each vertex group finite.
\end{theorem}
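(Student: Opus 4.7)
The plan is to prove the two directions separately, with induction on $|V(\Gamma)|$ driving the harder direction.

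For the ``if'' direction, suppose a vertex $v$ of $\Gamma$ satisfies (1) and (2). Since $lk(v)$ is a complete graph with all vertex groups finite, the subgroup $F$ of $G$ generated by the vertex groups of $lk(v)$ is a finite direct product of finite groups, hence finite. Because $v$ commutes with every vertex of $lk(v)$, the star of $v$ generates $G_v\times F$, and the standard decomposition of a graph product along a full separating subgraph gives
\[
G \;=\; (G_v\times F)\ast_F G_{\Gamma\setminus\{v\}}.
\]
Since $F$ is finite, $G_v\times F$ is a finite extension of $G_v$ and is therefore not semistable, by quasi-isometry invariance. I would then invoke the standard fact that non-semistability of a factor persists through an amalgam over a finite subgroup: non-properly-homotopic proper rays in the universal cover of a presentation $2$-complex of $G_v\times F$ extend, via the Bass--Serre tree of the splitting, to non-properly-homotopic proper rays in the universal cover of $G$.

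For the ``only if'' direction I would assume no vertex of $\Gamma$ satisfies both (1) and (2) and induct on $|V(\Gamma)|$. The base case $|V(\Gamma)|=1$ is immediate: the empty link is vacuously complete with all (nonexistent) vertex groups finite, so the hypothesis forces $G_v$ to be semistable. For the inductive step, fix a vertex $v$ and write $L=G_{lk(v)}$, $H=G_{\Gamma\setminus\{v\}}$, $A=G_v\times L$, so $G=A\ast_L H$ with $L$ finitely generated. If $G_v$ is semistable I would apply Theorem~\ref{MT}, once both $A$ and $H$ are shown to be semistable---$A$ by a direct-product argument and $H$ by induction. If $G_v$ is not semistable, then by the hypothesis $L$ must be infinite (finiteness of $L$ is precisely condition (2)), so $L$ is a finitely generated infinite subgroup sitting in both $A$ and $H$; here I would apply Theorem~\ref{MM} once $A$ and $H$ are verified to be $1$-ended and semistable.

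The main obstacle is twofold. First, the induction does not descend naively: a vertex $u$ adjacent to $v$ can become bad in $\Gamma\setminus\{v\}$ precisely because $v$ was the sole offending member of $lk_\Gamma(u)$, and the same phenomenon obstructs the descent to $lk(v)$. I expect the remedy is to choose $v$ carefully---for instance, a vertex whose removal provably creates no new bad vertex, or else to split $G$ along a minimal separating full subgraph of $\Gamma$ rather than the link of an arbitrary vertex---and to strengthen the inductive hypothesis to track this. Second, the hardest single ingredient is a ``direct-product absorbs non-semistability'' lemma: if $K$ is finitely presented and $L$ is an infinite, finitely generated, semistable group, then $K\times L$ is semistable. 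This lemma is exactly what converts the failure of condition~(2) into semistability of $A=G_v\times L$ in the crucial case where $G_v$ itself is non-semistable, and it is the technical engine feeding Theorems~\ref{MT} and~\ref{MM}.
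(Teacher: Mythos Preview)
Your ``if'' direction is essentially the paper's argument; the persistence of non-semistability through an amalgam over a finite group is exactly Theorem~\ref{splitF}.

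For the ``only if'' direction, your second worry is misplaced: the ``direct product absorbs non-semistability'' lemma you call the hardest ingredient is just Theorem~\ref{prod}, which says $K\times L$ is $1$-ended and semistable whenever $K$ and $L$ are infinite and finitely presented, with \emph{no} semistability hypothesis on either factor. So $A=G_v\times L$ is automatically $1$-ended and semistable once $L$ is infinite.

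The genuine gap is the descent problem you flagged first, and your proposed use of Theorem~\ref{MM} does not survive it. You want to apply Theorem~\ref{MM} to $A=G_v\times L$ and $H=G_{\Gamma\setminus\{v\}}$, but that theorem requires $H$ to be $1$-ended and semistable, and the descent obstruction blocks exactly this: removing $v$ can create a new bad vertex in $\Gamma\setminus\{v\}$, so induction gives nothing about $H$. Choosing $v$ more cleverly does not help, because in a minimal counterexample \emph{every} vertex with non-semistable group will have this feature (this is the content of Lemma~\ref{star}).

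The paper's resolution is to remove \emph{two} vertices at once. In a minimal counterexample, pick $v$ with $G_v$ non-semistable; the bad vertex $w$ created in $\Gamma\setminus\{v\}$ must lie in $lk(v)$, and one argues symmetrically that the bad vertex created in $\Gamma\setminus\{w\}$ is $v$ itself. This forces $lk(v)\setminus\{w\}$ and $lk(w)\setminus\{v\}$ to generate finite groups. Now Theorem~\ref{MM} is applied not to $A$ and $H$, but to $\langle st(v)\rangle$ and $\langle st(w)\rangle$ (both $1$-ended semistable by Theorem~\ref{prod}, with $G_v$ an infinite finitely generated subgroup of the intersection), yielding that $S=\langle st(v)\cup st(w)\rangle$ is semistable. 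In $\Gamma_3=\Gamma\setminus\{v,w\}$ every vertex with non-semistable group lies outside $lk(v)\cup lk(w)$ (those links contain only $v$, $w$, and vertices with finite groups), so its link is unchanged and the inductive hypothesis applies cleanly to $G_{\Gamma_3}$. Finally Theorem~\ref{MT} on the splitting $G=G_{\Gamma_3}\ast_{\langle (lk(v)\cup lk(w))\setminus\{v,w\}\rangle} S$ gives the contradiction.
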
 

If $\Gamma_1$ is a subgraph of $\Gamma$ then let $\langle \Gamma_1\rangle$ denote the subgroup of $G$ generated by the vertex groups of $\Gamma_1$. For $v$ a vertex of $\Gamma$, let $lk(v)$ (the link of $v$)  be the full subgraph of $\Gamma$ on the vertices adjacent to $v$. Let $st(v)$ (the star of $v$) be $\{v\}\cup lk(v)$. 
Results in Section \ref{GP} show that when the graph product $G$ is not semistable (and $\Gamma$ is not complete), then $G$ ``visually" splits as an amalgamated product $G=\langle st(v)\rangle \ast_{\langle lk(v)\rangle} \langle \Gamma-\{v\}\rangle$, where $v$ is a vertex of $\Gamma$, $G_v$ is not semistable and $\langle lk(v)\rangle$ is a finite group. 
The theory of ends of finitely generated graph products is completely worked out by O. Varghese.

\begin{theorem} \label{OV} (O. Varghese, \cite{OV})  
Suppose $G$ is a finitely generated graph product group on the graph $\Gamma$ (so that each $G_v$ is finitely generated and $\Gamma$ is finite) and $G$ has more than one end, then either:

(i) $\Gamma$ is a complete graph such that one vertex group has more than one end and all others are finite, or

(ii) $G$ visually splits over a finite group. This means that there are non-empty full subgraphs $\Gamma_1$  and $\Gamma_2$ of $\Gamma$  (neither containing the other) such that $\Gamma=\Gamma_1\cup \Gamma_2$, (so $\Gamma_1\cap \Gamma_2$ separates $\Gamma$) and $\langle \Gamma_1\cap \Gamma_2\rangle$ is a finite group. In particular, $G$ visually decomposes as the (non-trivial) amalgamated product
$$G \cong \langle \Gamma_1\rangle\ast _{\langle\Gamma_1\cap \Gamma_2\rangle} \langle\Gamma_2\rangle$$  
\end{theorem}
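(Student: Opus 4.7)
I would argue by induction on $|V(\Gamma)|$. The base case of a single vertex is immediate, since the one-vertex graph is vacuously complete so case (i) holds.

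For the inductive step, first suppose $\Gamma$ is complete. Then $G$ is the direct product $\prod_{v \in V(\Gamma)} G_v$ of its vertex groups. Using the standard fact that a direct product of two finitely generated infinite groups is one-ended, and that multiplying by a finite factor does not change the number of ends, the hypothesis that $G$ has more than one end forces all but one vertex group to be finite and the remaining one to have more than one end. This is case (i).

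Now suppose $\Gamma$ is not complete and fix any vertex $v$ with $\Gamma - st(v) \neq \emptyset$. Because $v$ is non-adjacent to every vertex of $\Gamma - st(v)$, reading off the defining presentation shows that there is a visual amalgamated splitting
$$G = \langle st(v)\rangle \ast_{\langle lk(v)\rangle} \langle \Gamma - \{v\}\rangle.$$
If some such $v$ has $\langle lk(v)\rangle$ finite (equivalently, $lk(v)$ is a clique whose vertex groups are all finite), this is the desired visual splitting of $G$ over a finite subgroup, and case (ii) holds.

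The real content of the theorem is the remaining configuration: $\Gamma$ is not complete, yet $\langle lk(v)\rangle$ is infinite for every vertex $v$ whose star is proper. My plan is to show that this configuration forces $G$ to be one-ended, contradicting the hypothesis. Fixing such a $v$, one applies the inductive hypothesis to the graph products $\langle st(v)\rangle$ and $\langle \Gamma - \{v\}\rangle$ on the smaller graphs $st(v)$ and $\Gamma - \{v\}$, and combines with the classical Bass-Serre fact that an amalgamated product $A \ast_C B$ over a finitely generated infinite subgroup $C$, with both $A$ and $B$ one-ended, is itself one-ended (proved via Stallings: such an amalgam admits no splitting over a finite subgroup, since either vertex group would refine it, and it is not two-ended for rank reasons). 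The main obstacle is the case analysis when one of the factors falls into outcome (i) or (ii) of the induction; one must check that such a local configuration either directly produces a vertex $u \in \Gamma$ contradicting $\langle lk_\Gamma(u)\rangle$ being infinite (by transferring the local clique-of-finite-groups structure back to $\Gamma$), or can be iterated to produce a genuine visual splitting of $G$ over a finite subgroup, again contradicting the standing hypothesis. Pushing this bookkeeping through --- carefully comparing $lk_{st(v)}(\cdot)$ and $lk_{\Gamma - \{v\}}(\cdot)$ with $lk_\Gamma(\cdot)$ --- is the step I expect to be the most delicate.
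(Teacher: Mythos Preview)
The paper does not prove this theorem: it is quoted as a result of Varghese with a citation, and no argument is supplied here. So there is no ``paper's own proof'' to compare against.

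On its own merits, your plan is reasonable in outline but stops short of the real work. The base case, the complete-graph case, and the reduction to the visual splitting $G=\langle st(v)\rangle\ast_{\langle lk(v)\rangle}\langle\Gamma-\{v\}\rangle$ are all fine. The part you flag as ``most delicate'' is exactly the gap: you have not shown that in the residual configuration (every relevant $\langle lk(v)\rangle$ infinite, $\Gamma$ not complete) both factors in your chosen splitting are one-ended, nor have you written down the promised bookkeeping that transfers an inductive outcome (i) or (ii) on $st(v)$ or $\Gamma-\{v\}$ back to a contradiction or to a visual finite splitting of $G$. Concretely, if the induction returns case (i) for $st(v)$ with the infinite vertex group sitting at some $w\in lk(v)$, you do not immediately get $\langle lk_\Gamma(w)\rangle$ finite, since $lk_\Gamma(w)$ may properly contain $st(v)-\{w\}$; and if it returns case (ii) for $\Gamma-\{v\}$, the separating full subgraph there need not separate $\Gamma$ once $v$ is reinserted. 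These are the cases you would need to chase down explicitly before the induction closes. As written, the proposal is a plausible strategy with the decisive case left as an exercise.
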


Condition (ii) is equivalent to the condition that there is complete separating subgraph $\Delta$ of $\Gamma$ such that each vertex group of $\Delta$ is finite.   
For graph products of groups, Theorem \ref{OV} is a visual version of J. Stallings splitting theorem for general finitely generated groups. Stallings theorem states that a finitely generated group has more than 1-end if and only if it splits non-trivially over a finite group.

The following corollary is a combination result. Part (1) follows directly from Theorem \ref{main} and Part (2) follows directly from Theorems \ref{main} and \ref{OV}.
 
\begin{corollary} \label{C1}
Suppose $G$ is a graph product on the finite connected graph $\Gamma$ and each vertex group of $\Gamma$ is finitely presented. If either of the following conditions are met, then $G$ has semistable fundamental group at $\infty$:

 (1) Each vertex group has semistable fundamental group at $\infty$.
 
  (2) The group $G$ is 1-ended and $\Gamma$ is not complete. 
\end{corollary}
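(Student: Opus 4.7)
The plan is to obtain each part as a short contrapositive from Theorem \ref{main}, with part (2) additionally invoking a splitting-to-ends argument (Stallings' theorem, as recorded around Theorem \ref{OV}).

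For part (1), I would suppose $G$ fails to be semistable at $\infty$. Theorem \ref{main} then supplies a vertex $v$ of $\Gamma$ with $G_v$ not semistable, which directly contradicts the hypothesis that every vertex group is semistable. This step is entirely formal.

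For part (2), I would again argue by contraposition: assuming $G$ is not semistable at $\infty$, I must show that $G$ fails to be 1-ended or that $\Gamma$ is complete. Theorem \ref{main} yields a vertex $v$ with $G_v$ not semistable and $lk(v)$ a complete graph in which every vertex group is finite; consequently $\langle lk(v)\rangle$ is a finite group (a product of pairwise commuting finite groups). I then split on whether $lk(v) = \Gamma-\{v\}$. If so, $v$ is adjacent to every other vertex, and $\Gamma = \{v\}\cup lk(v)$ is itself complete, giving one half of the disjunction. Otherwise, there is a vertex $w \in \Gamma-\{v\}$ with $w \notin lk(v)$, and the visual amalgamated splitting
\[
G \;=\; \langle st(v)\rangle \ast_{\langle lk(v)\rangle} \langle \Gamma-\{v\}\rangle
\]
flagged in the paragraph after Theorem \ref{main} is non-trivial over the finite group $\langle lk(v)\rangle$. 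Stallings' theorem (the converse direction to Theorem \ref{OV}) then forces $G$ to have more than one end, so $G$ is not 1-ended.

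The only (mild) obstacle is checking that the amalgamated product really is non-trivial: the first factor strictly contains the edge group because $G_v$ is non-trivial, and the second because $G_w$ is non-trivial with $w\notin lk(v)$. Beyond this bookkeeping and the case split above, every piece of the argument is a direct quotation of Theorem \ref{main} or Theorem \ref{OV}, so I anticipate no serious difficulty.
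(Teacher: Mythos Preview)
Your argument is correct and matches the paper's own reasoning, which simply records that part (1) follows from Theorem~\ref{main} and part (2) from Theorems~\ref{main} and~\ref{OV}; you have merely spelled out the case split and the non-triviality check that make the ``direct'' deduction work. The one phrasing quibble is that what you invoke in part (2) is the easy direction of Stallings' theorem (a non-trivial splitting over a finite group forces more than one end), which is recorded in the paragraph following Theorem~\ref{OV} rather than being the converse of Theorem~\ref{OV} itself.
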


The remainder of the paper is organized as follows: Section \ref{GP} contains basic results on graph products of groups. These results are all based on ``standard presentations" of graph products of groups.  Section \ref{SS} contains definitions and  results on groups with semistable fundamental group at $\infty$. If $G$ is a finitely presented 1-ended group with semistable fundamental group at $\infty$ we define the fundamental group at $\infty$ for $G$. The proof of the main theorem is produced in Section \ref{PF}.  Finally a discussion of possible generalizations of the main theorem from finitely presented to finitely generated groups is discussed in Section \ref{FG}. There are finitely generated groups that are known to have non-semistable fundamental group at $\infty$. 

\section{Graph Products of Groups} \label{GP} 
Let $V(\Gamma)$ be the vertices and $E(\Gamma)$ the edges of a graph $\Gamma$ (so $E\subset V\times V$). Suppose that for each vertex $v\in V(\Gamma)$, $G_v$ is a group.  The {\it graph product} for $(\Gamma, \{G_v\}_{v\in V(\Gamma)})$  is the quotient of the free product of the $G_v$ for $v\in V(\Gamma)$ by the normal closure of the set of all commutators $[a,b]$ where $a\in G_v$, $b\in G_w$ and $(v,w)\in E(\Gamma)$.  For $v\in \Gamma$, let $\mathcal P_v=\langle S_v:R_v\rangle$ be a presentation for $G_v$. The {\it standard presentation} for $G$ corresponding to  $\{\mathcal P_v\}_{v\in V(\Gamma)}$ is 
$$\langle S:R\rangle$$ 
where  $R=(\cup_{v\in V(\Gamma)} R_v)\cup \{[a,b]:a\in S_v, b\in S_w \hbox{ where } (v,w)\in E(\Gamma)\}$ and $S=\cup_{v\in V(\Gamma)} S_v$. 

If $\Gamma_1$ is a full subgraph of $\Gamma$ we denote the subgroup of $G$ generated by the $G_v$ for $v\in \Gamma_1$ by $\langle \Gamma_1\rangle_G$ (or simply $\langle \Gamma_1\rangle$ when the over group $G$ is evident).  If $G_1$ is the graph product for $(\Gamma_1,\{G_v\}_{v\in V(\Gamma_1)})$  there is a natural homomorphism $m:G_1\to G$ with image $\langle \Gamma_1\rangle_G$ (induced by the inclusion of $\Gamma_1$ in $\Gamma$).  We need to recall a few basic facts about graph products. 

\begin{lemma} \label{sub}
Suppose $G$ is the graph product for $(\Gamma, \{G_v\}_{v\in V(\Gamma)})$.
If $\Gamma_1$ is a full subgraph of $\Gamma$, then the subgroup $\langle \Gamma_1\rangle$ of $G$ is isomorphic to  $G_1$, the graph product for $(\Gamma_1,\{G_v\}_{v\in \Gamma_1})$. 

In fact, if $m:G_1\to G$ is the natural homomorphism with image $\langle\Gamma_1\rangle$ and $q$ is the quotient homomorphism of $G$ by the normal closure of the union of the $G_v$ where $v\not\in V(\Gamma_1)$ 
$$G_1{\buildrel m \over \longrightarrow} \ G\ {\buildrel q\over \twoheadrightarrow}\  G/\langle \langle\cup_{v\not\in V(\Gamma_1)}G_v\rangle \rangle$$
then the image of $q$ is isomorphic to $G_1$ and the composition $qm$ is an isomorphism. 
In particular, $\langle \Gamma_1\rangle (\cong G_1)$ is a retract of $G$.    
\end{lemma}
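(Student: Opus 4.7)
The plan is to exploit the universal property of graph products via their standard presentations, using a quotient map to produce a left inverse to the natural homomorphism $m$. Fix presentations $\mathcal{P}_v=\langle S_v:R_v\rangle$ for each vertex group, and write $\langle S:R\rangle$ and $\langle S_1:R_1\rangle$ for the associated standard presentations of $G$ and $G_1$, where $S_1=\bigcup_{v\in V(\Gamma_1)}S_v\subseteq S$ and $R_1$ is the obvious subset of $R$. The key point will be that because $\Gamma_1$ is a \emph{full} subgraph, every edge of $\Gamma$ with both endpoints in $V(\Gamma_1)$ is already an edge of $\Gamma_1$, so the standard presentation of $G_1$ is visible inside the standard presentation of $G$ with no extra or missing relations.

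First I identify the target of $q$ with $G_1$. The normal closure $\langle\langle\cup_{v\notin V(\Gamma_1)}G_v\rangle\rangle$ is generated as a normal subgroup by $S\setminus S_1$, so passing to the quotient amounts to adjoining the relations $s=1$ for each $s\in S\setminus S_1$. After these generators are killed, the relations in $R_v$ for $v\notin V(\Gamma_1)$ become trivial, the commutation relations $[a,b]=1$ with $a\in S_v$ or $b\in S_w$ for a vertex outside $\Gamma_1$ become trivial, and the only surviving relations are exactly those of $\langle S_1:R_1\rangle$. Thus the quotient is canonically isomorphic to $G_1$, and under this identification $q$ sends each $s\in S_1$ to itself and each $s\in S\setminus S_1$ to the identity.

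Next I compute $qm$. The map $m:G_1\to G$ sends each generator $s\in S_1$ to the same letter viewed in $G$, and then $q$ returns it to $s\in G_1$, so $qm$ is the identity on generators of $G_1$ and hence is the identity homomorphism. In particular $qm$ is an isomorphism. Since $qm$ is injective, so is $m$, and therefore $m$ maps $G_1$ isomorphically onto its image $\langle\Gamma_1\rangle$, proving the first assertion.

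Finally, for the retraction, I set $r:G\to\langle\Gamma_1\rangle$ to be $r=m\circ q$ (using the identification from Step 1 so that $q$ lands in $G_1$). For any $g\in\langle\Gamma_1\rangle$, write $g=m(g_1)$; then $r(g)=m(q(m(g_1)))=m((qm)(g_1))=m(g_1)=g$, so $r$ restricts to the identity on $\langle\Gamma_1\rangle$. The only delicate step is the identification in Step 1, and it is precisely the fullness of $\Gamma_1$ that makes that identification clean; every other step is a direct application of the universal property.
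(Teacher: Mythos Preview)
Your proof is correct and follows essentially the same route as the paper: use the standard presentations, kill the generators $S_v$ for $v\notin V(\Gamma_1)$ to identify the quotient with $G_1$, and observe that $qm$ is the identity on generators. You are a bit more explicit than the paper in highlighting the role of fullness and in spelling out the retraction $r=m\circ q$, but the argument is the same.
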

\begin{proof} For each $v\in V(\Gamma)$ let $\mathcal P_v=\langle S_v:R_v\rangle$ be a presentation for $G_v$. Let $\mathcal P=\langle S:R\rangle$ be the presentation for $G$ corresponding to the $\mathcal P_v$. The quotient group of $G$ by the normal closure of the $G_v$ with $v\not\in V(\Gamma_1)$ has presentation $\langle S: R\cup (\cup_{v\not\in V(\Gamma_1)}S_v )\rangle$. Tietze moves that eliminate the generators of all $S_v$ for $v\not\in V(\Gamma_1)$ leave a presentation for $\langle \Gamma_1\rangle$. This presentation is the presentation for $G_1$ corresponding to the $\mathcal P_v$ for $v\in V(\Gamma_1)$ and $mq$ is the identity on the generators of this presentation for $G_1$ and hence an isomorphism. 
\end{proof}

Lemma \ref{sub} applied to a standard presentation of a graph product implies: 
 
\begin{lemma}\label{FGP}
If $G$ is the graph product on $(\Gamma, \{G_v\}_{v\in V(\Gamma)})$ then $G$ is finitely generated (presented) if and only if $\Gamma$ is a finite graph and $\Gamma_v$ is finitely generated (presented) for each $v\in V(\Gamma)$.
\end{lemma}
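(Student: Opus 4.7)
The plan is to deduce both statements directly from Lemma \ref{sub}. The backward implication is immediate from the definition of the standard presentation: if $\Gamma$ is a finite graph and each $G_v$ has a presentation $\mathcal{P}_v = \langle S_v : R_v \rangle$ with $S_v$ finite (respectively with both $S_v$ and $R_v$ finite), then the standard presentation $\langle S : R \rangle$ has generating set $S = \bigcup_v S_v$ finite, and relation set $R$ consisting of the union of the $R_v$ together with the commutators coming from the finitely many edges of $\Gamma$. This yields a finite generating set (respectively a finite presentation) for $G$.

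For the forward direction I would proceed in two steps. First, for each $v \in V(\Gamma)$, Lemma \ref{sub} applied with $\Gamma_1 = \{v\}$ exhibits $G_v$ as a retract of $G$. Since both finite generation and finite presentation are preserved under retracts, $G_v$ inherits the relevant finiteness property from $G$.

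Second, I would show that $\Gamma$ itself must be finite. Suppose $G = \langle t_1, \dots, t_n \rangle$. Each $t_i$ is a word in the generators of only finitely many of the vertex groups, so there is a finite subset $V' \subseteq V(\Gamma)$ with $G = \langle V' \rangle$. If some $w \in V(\Gamma) \setminus V'$ existed, Lemma \ref{sub} would supply a retract $q_w : G \to G_w$ whose kernel contains every $G_v$ with $v \neq w$; in particular $q_w$ kills every generator in $V'$, so $q_w(G) = q_w(\langle V'\rangle) = 1$. But $q_w$ restricts to the identity on the subgroup $G_w \leq G$, forcing $G_w$ to be trivial and contradicting the standing non-triviality hypothesis on vertex groups. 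Hence $V(\Gamma) = V'$ is finite. The only mildly non-routine ingredient is the fact that a retract of a finitely presented group is finitely presented, which I would simply cite as classical rather than reprove.
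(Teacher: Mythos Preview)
Your proof is correct and follows exactly the approach the paper indicates: the paper simply states that the lemma follows from Lemma~\ref{sub} applied to a standard presentation, without supplying any details, and what you have written is a faithful fleshing-out of that claim. The backward direction via the standard presentation, the retract argument for the vertex groups, and the use of the retraction $q_w$ to force $V(\Gamma)$ finite are all the natural steps implicit in the paper's one-line justification; your only external input (retracts of finitely presented groups are finitely presented) is indeed classical and safely citable.
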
 

\begin{lemma}\label{split} 
Suppose $\Delta$ is a subgraph of $\Gamma$ that separates the vertices of $\Gamma-\Delta$ into two non-empty disjoint sets $A$ and $B$ and no vertex of $A$ is connected to a vertex of $B$ by an edge. Let $\Gamma_A$ be the full subgraph of $\Gamma$ on the set of vertices in $A\cup \Delta$ and similarly for $\Gamma_B$. Let $G_A$, $G_B$ and $G_\Delta$ be the graph products on $\Gamma_A$, $\Gamma_B$ and $\Delta$ respectively, with vertex groups from $\Gamma$. Then $G$ ``visually" decomposes as $G_A\ast_{G_\Delta} G_B$. 
\end{lemma}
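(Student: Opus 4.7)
The plan is to compare standard presentations and then invoke Lemma \ref{sub} to turn the abstract comparison into the asserted visual splitting. Choose a presentation $\mathcal P_v=\langle S_v:R_v\rangle$ for each vertex group and let $\langle S:R\rangle$ be the associated standard presentation of $G$. Partition the vertex set as $V(\Gamma)=A\sqcup V(\Delta)\sqcup B$, and correspondingly set $S_A=\bigcup_{v\in A}S_v$, $S_B=\bigcup_{v\in B}S_v$, $S_\Delta=\bigcup_{v\in V(\Delta)}S_v$. By hypothesis there is no edge between a vertex of $A$ and a vertex of $B$, so no commutator in $R$ mixes a letter of $S_A$ with a letter of $S_B$. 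Consequently every relation in $R$ is either a relation in the standard presentation of $G_A$ on generators $S_A\cup S_\Delta$, or a relation in the standard presentation of $G_B$ on generators $S_B\cup S_\Delta$, and the relations common to both are exactly the relations in the standard presentation of $G_\Delta$ on generators $S_\Delta$.

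This is precisely the presentation of the pushout: the identity map on $S$ exhibits $\langle S:R\rangle$ as the amalgamated free product of the abstract groups $G_A$ and $G_B$ over $G_\Delta$, so $G\cong G_A\ast_{G_\Delta}G_B$ as abstract groups. To upgrade this to the ``visual'' statement $G\cong \langle\Gamma_A\rangle\ast_{\langle\Delta\rangle}\langle\Gamma_B\rangle$, apply Lemma \ref{sub} three times: the natural maps $G_A\to G$, $G_B\to G$, and $G_\Delta\to G$ are injective with images $\langle\Gamma_A\rangle$, $\langle\Gamma_B\rangle$, $\langle\Delta\rangle$ respectively. Applying Lemma \ref{sub} also to the inclusions $\Delta\subseteq\Gamma_A$ and $\Delta\subseteq\Gamma_B$ shows that $G_\Delta$ embeds in each of $G_A$ and $G_B$, so the pushout is genuinely an amalgamated free product and the subgroup identifications are compatible.

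The argument is essentially bookkeeping, and the only mildly delicate point — which is the main place to be careful — is precisely this last injectivity: one must know that the copy of $\langle\Delta\rangle$ sitting inside $\langle\Gamma_A\rangle$ agrees, as a subgroup of $G$, with the copy sitting inside $\langle\Gamma_B\rangle$. This is immediate once Lemma \ref{sub} is invoked uniformly, because all three subgroups are identified via the retractions coming from killing the ``outside'' generators, and those retractions commute with the inclusions $\Delta\hookrightarrow\Gamma_A\hookrightarrow\Gamma$ and $\Delta\hookrightarrow\Gamma_B\hookrightarrow\Gamma$.
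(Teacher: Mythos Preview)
Your proof is correct and follows essentially the same approach as the paper: both arguments compare standard presentations and observe that, since no edge joins $A$ to $B$, the relators of $G$ split exactly into those of $G_A$ and $G_B$ with overlap the relators of $G_\Delta$, so the presentation of $G$ coincides with that of the amalgamated product. Your additional appeal to Lemma~\ref{sub} to verify the injectivity of $G_\Delta\hookrightarrow G_A,G_B$ and the compatibility of the copies of $\langle\Delta\rangle$ is a bit more explicit than the paper, which simply notes the presentations match, but it is not a different route.
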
 

\begin{proof}
For each vertex $v\in \Gamma$, let $\mathcal P_v$ be the presentation for $G_v$ with generators $S_v$ and relations $R_v$. A  presentation for $G_A$ is $\langle S_A:R_A\rangle$ where $R_A=$
$$(\cup_{v\in (A\cup \Delta)} R_v) \cup \{ [a,b]:a\in S_v, b\in S_w \hbox{ where } v,w\in A\cup \Delta \hbox{ and } (v,s)\in E(\Gamma)\}$$ 
$$S_A=\cup_{v\in (A\cup \Delta)} S_v$$ 
Similarly one obtains a presentation for $G_B$ as $\langle S_B:R_B\rangle$ and for $G_\Delta$ as $\langle S_\Delta:R_\Delta\rangle$. The important thing to observe is there is no edge in $\Gamma$ from a vertex of $A$ to a vertex of $B$ and if $E$ is an edge of $\Gamma$ it is an edge of the full subgraph of $\Gamma$ on $A\cup \Delta$ or $B\cup \Delta$, or both - in which case $E$ is an edge of $\Delta$. Note that $S_A\cap S_B=S_\Delta$.  Hence $\langle S_A \cup S_B:R_A\cup R_B\rangle$ is the usual presentation for the amalgamated product $G_A\ast_{G_\Delta}G_B$, obtained from the presentations $\langle S_A:R_A\rangle$ and $\langle S_B:R_B\rangle$ by identifying generators of $G_\Delta$ in $S_A$ with their counterparts in $S_B$. This last presentation is also the presentation for $G$ corresponding to the presentations $\mathcal P_v$ for $v\in V(\Gamma)$.  
\end{proof}

\section{Semistability Preliminaries}\label{SS}

 A continuous map between topological spaces $m:X\to Y$ is {\it proper} if for each compact set $C\subset Y$, $m^{-1}(C)$ is compact in $X$. Given proper maps $f,g:X\to Y$, we say $f$ is properly homotopic to $g$ if there is a proper map $H:X\times [0,1]\to Y$ such that $H(x,0)=f(x)$ and $H(x,1)=h(x)$ for all $x\in X$. 

The notion of semistable fundamental group at $\infty$ makes sense for a wide range of topological spaces. We are  only interested in locally finite connected $CW$ complexes in this article. R. Geoghegan's book \cite{G} is an excellent source for the basic theory of semistable spaces and groups. Suppose $X$ is a locally finite connected CW complex. Two proper rays $r,s:[0,\infty)\to X$ {\it converge to the same end} of $X$ if for any compact $C\subset X$,  $r$ and $s$ eventually stay in the same component of $X-C$. We say $X$ has {\it semistable fundamental group at $\infty$} if any two proper rays $r,s:[0,\infty)\to X$ that converge to the same end of $X$ are properly homotopic. Suppose $X$ has 1-end.  For each $i\in \{0,1,\ldots\}$, suppose $C_i$ and $D_i$ are compact sets such that $C_i$ (respctively $D_i$) is a subset of the interior of $C_{i+1}$ (respectively $D_{i+1}$). Say $r,s:[0,\infty)\to X$ are proper rays. If $X$ has semistable fundamental group at $\infty$ then the inverse systems $\pi_1(X-C_i,r)$ and $\pi_1(X-D_i,s)$ (with bonding maps induced by inclusion) are pro-isomorphic 
and the inverse limit of such a system is the {\it fundamental group at $\infty$} of $X$.   

The finitely presented group $G$ has semistable fundamental group at $\infty$ if for some (equivalently any) finite connected CW complex $X$, with $\pi_1(X)=G$, the universal cover of $X$ has semistable fundamental group at $\infty$. For simplicity we say $G$ is semistable. If $G$ is 1-ended and semistable, then the fundamental group at $\infty$ of $G$ is the fundamental group at $\infty$ of the universal cover of $X$. Again, this is independent of finite complex $X$ as long as $\pi_1(X)=G$.

Over the last 40 years a substantial theory has been built to study the semistability of finitely presented groups.  It is as yet unknown if all finitely presented groups are semistable. Using M. Dunwoody's accessibility theorem for finitely presented groups, the problem is reduced to the same problem for 1-ended groups \cite{M87}. For our purposes, we only need a few of the results in the literature. In \cite{BR93} S. Brick showed that semistablity is a quasi-isometry invariant of finitely presented groups. This immediately implies:
 
 \begin{theorem} \label{FI} 
 If $A$ is a finitely presented group and $A$ has finite index in the group $B$, then $A$ has semistable fundamental group at $\infty$ if and only if $B$ has semistable fundamental group at $\infty$.
 \end{theorem}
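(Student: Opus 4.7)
The plan is to derive Theorem \ref{FI} as a direct consequence of Brick's quasi-isometry invariance result, after handling the minor preliminary of getting both groups into the finitely presented category.

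First, I would check that the statement is well-posed, i.e., that $B$ is finitely presented so that the notion of semistable fundamental group at $\infty$ applies to $B$. This is a standard consequence of $A$ being finitely presented with $[B:A]<\infty$: one can build a finite presentation for $B$ from one for $A$ together with finitely many coset representatives (Reidemeister--Schreier, or equivalently by noting that $A$ acts cocompactly on a finite presentation $2$-complex for $B$ pulled back to the cover corresponding to $A$). So both $A$ and $B$ are finitely presented, and the semistability of each is defined.

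Next, I would observe that $A$ and $B$ are quasi-isometric. Fix a finite generating set $S_A$ of $A$ and a finite set $T$ of coset representatives for $A$ in $B$; then $S_A\cup T$ is a finite generating set for $B$. The inclusion $(A,d_{S_A})\hookrightarrow (B,d_{S_A\cup T})$ is a bi-Lipschitz embedding because word length in $A$ dominates word length in $B$ (being a longer list of allowed generators/relations makes $B$-length smaller up to a constant), and every element of $B$ lies within $(\max_{t\in T}|t|)$ of $A$, so the embedding is also quasi-surjective. Thus $A$ and $B$ are quasi-isometric.

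Finally, I would invoke Brick's theorem \cite{BR93}, cited immediately above the statement, which asserts that semistability of the fundamental group at $\infty$ is a quasi-isometry invariant among finitely presented groups. Applying this to the quasi-isometry between $A$ and $B$ gives the biconditional, proving the theorem. There is no genuine obstacle here; the only point that requires a line of care is ensuring $B$ is finitely presented before quoting Brick's theorem, since that hypothesis is part of his statement.
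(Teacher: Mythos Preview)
Your proposal is correct and follows exactly the paper's approach: the paper simply notes that Brick \cite{BR93} proved semistability is a quasi-isometry invariant of finitely presented groups and states Theorem \ref{FI} as an immediate consequence. Your extra verification that $B$ is finitely presented and that $A\hookrightarrow B$ is a quasi-isometry just fills in the standard details the paper leaves implicit.
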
 
 
 \begin{theorem}\label{prod}  (Theorem 2.2, \cite{M1}) 
 Suppose $X$ and $Y$ are locally finite connected and infinite CW-complexes. Then $X\times Y$ is 1-ended with semistable fundamental group at $\infty$. In particular, if $G$ and $H$ are finitely presented infinite groups, then $G\times H$ is 1-ended with semistable fundamental group at $\infty$.
 \end{theorem}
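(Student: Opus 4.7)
The plan has two parts: prove the topological statement for $X \times Y$ directly, then deduce the group statement.

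For 1-endedness, given any compact $C \subset X \times Y$, I would project to find compact sets $K \subset X$ and $L \subset Y$ with $C \subseteq K \times L$. Since $X$ and $Y$ are infinite and locally finite, pick $x_0 \in X - K$ and $y_0 \in Y - L$. Any point $(x,y) \notin K \times L$ satisfies $x \notin K$ or $y \notin L$; in the first case, connect $(x,y)$ to $(x_0, y_0)$ by the L-shaped path going first along $\{x\} \times Y \subset (X - K) \times Y$ (avoiding $K \times L$ because $x \notin K$) and then along $X \times \{y_0\} \subset X \times (Y - L)$ (avoiding $K \times L$ because $y_0 \notin L$). The symmetric path handles the other case, so the complement of $K \times L$ is path-connected, giving 1-endedness.

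For semistability, I would fix proper rays $\alpha: [0,\infty) \to X$ and $\beta: [0,\infty) \to Y$, which exist because $X$ and $Y$ are locally finite, connected, and infinite, and let $\sigma(t) = (\alpha(t), \beta(t))$ be the ``standard'' proper ray in $X \times Y$. The key observation is that when $\alpha$ is proper, any continuous map $(t,u) \mapsto (\alpha(t), H_Y(t,u))$ is automatically proper into $X \times Y$: the constraint $\alpha(t) \in K$ already forces $t$ into a compact set, regardless of $H_Y$. This single observation immediately delivers both (i) that any two standard rays are properly homotopic, and (ii) that any proper ray $r = (r_1, r_2)$ with $r_1$ proper in $X$ (or, symmetrically, $r_2$ proper in $Y$) is properly homotopic to $\sigma$.

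The main obstacle is reducing an arbitrary proper ray $r = (r_1, r_2)$ to one whose first coordinate is proper in $X$, since neither $r_1$ nor $r_2$ need be individually proper even though $r$ is. Here I would pick exhaustions $K_n \subset X$ and $L_n \subset Y$ and times $t_n \to \infty$ with $r([t_n, \infty)) \cap (K_n \times L_n) = \emptyset$, so on $[t_n, t_{n+1}]$ the sets $\{t : r_1(t) \notin K_n\}$ and $\{t : r_2(t) \notin L_n\}$ form an open cover. On a subinterval where $r_2 \notin L_n$ I would detour $r_1$ through $\alpha$ at a level high enough that the detour is properly escaping in $X$, while leaving $r_2$ fixed (the homotopy stays outside $K_n \times L_n$ because the second coordinate does); dually on a subinterval where $r_1 \notin K_n$. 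Gluing these piecewise detours into a single continuous proper ``staircase'' homotopy — forcing $r_1$ into eventually proper behavior without losing control of $r_2$ — is the technically delicate step.

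For the group statement: if $G$ and $H$ are finitely presented infinite groups, choose finite connected CW complexes $X$ and $Y$ with $\pi_1(X) = G$ and $\pi_1(Y) = H$. The universal cover of the finite CW complex $X \times Y$ is $\widetilde X \times \widetilde Y$, a product of locally finite, connected, infinite CW complexes, so the topological statement applies.
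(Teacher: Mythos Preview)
The paper does not prove Theorem~\ref{prod}; it merely quotes it from \cite{M1} and uses it as a black box. There is therefore no in-paper proof to compare your proposal against.

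As for the proposal itself: the 1-endedness argument and the deduction of the group statement from the topological one are correct and standard. Your ``key observation''---that if one coordinate of a homotopy is already proper in its factor then the product map is automatically proper---is exactly the right leverage point, and it cleanly dispatches the case where one projection of $r$ happens to be proper. The genuine content, as you identify, is the reduction of an \emph{arbitrary} proper ray $r=(r_1,r_2)$ to this situation, and here your sketch is incomplete. The detour idea is sound in spirit, but ``detour $r_1$ through $\alpha$ at a level high enough'' hides the difficulty: the path in $X$ from $r_1(t)$ to a far-out point on $\alpha$ will typically pass through $K_n$, so the detour itself need not stay outside $K_n$ in the $X$-coordinate; you are relying on $r_2(t)\notin L_n$ to keep the \emph{product} outside $K_n\times L_n$ during the detour, which is fine for the homotopy being proper but does \emph{not} by itself make the resulting $r_1'$ proper in $X$. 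Arranging continuity at the interfaces between the two types of subintervals while forcing $r_1'$ to genuinely escape every $K_n$ is where the actual work lies, and your sketch stops just short of saying how to do it. The standard resolution (in \cite{M1}) is a careful inductive ``staircase'' of exactly this flavor, so you are on the right track, but the delicate step you flag is not a formality---it is the proof.
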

 
The next result is proved in \S\ref{SimpApp}
 \begin{theorem} \label{splitF} 
 Suppose the group $G$ splits as $A\ast_C B$, where $C$ is finite and $A$ and $B$ are finitely presented. If $A$ does not have semistable fundamental group at  $\infty$, then  $G$ does not have semistable fundamental group at $\infty$. 
 \end{theorem}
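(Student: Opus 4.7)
The plan is to realize $G = A \ast_C B$ geometrically and then show that the non-semistability of $A$ propagates to $G$. Starting from finite presentation complexes $X_A$, $X_B$, and a finite complex $X_C$ with $\pi_1(X_C)=C$, build the finite complex $Y = X_A \cup_{X_C \times \{0\}} (X_C \times [0,1]) \cup_{X_C \times \{1\}} X_B$, where the attaching maps $X_C \to X_A$ and $X_C \to X_B$ induce the inclusions $C \hookrightarrow A$ and $C \hookrightarrow B$. Bass-Serre theory then decomposes the universal cover $\tilde Y$ as a tree of spaces over the Bass-Serre tree $T$ of the splitting: vertices index copies of $\tilde X_A$ or $\tilde X_B$, edges index tubes $\tilde X_C^e \times [0,1]$, and adjacent pieces are glued along copies of $\tilde X_C^e$. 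The crucial structural input is that, because $C$ is finite, each $\tilde X_C^e$ is compact and simply connected, so each tube is compact.

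Let $v_0 \in T$ correspond to $A$ and let $\tilde X_A^0 \subset \tilde Y$ be the associated copy of $\tilde X_A$. Since $A$ is not semistable, choose proper rays $r, s : [0,\infty) \to \tilde X_A^0$ that converge to the same end of $\tilde X_A^0$ but are not properly homotopic there. Viewed in $\tilde Y$ they remain proper (any compact in $\tilde Y$ intersects $\tilde X_A^0$ in a compact set) and converge to the same end of $\tilde Y$ (any path component of $\tilde X_A^0$ minus a compact lies in a single path component of $\tilde Y$ minus that compact). Hence it suffices to show that $r$ and $s$ are not properly homotopic in $\tilde Y$.

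Suppose for contradiction that $H : [0,\infty) \times [0,1] \to \tilde Y$ is a proper homotopy from $r$ to $s$; after a preliminary adjustment we may assume the short left edge of the strip $S = [0,\infty) \times [0,1]$ also maps into $\tilde X_A^0$. For each edge $e$ of $T$ incident to $v_0$, the compact cross-section $m_e := \tilde X_C^e \times \{1/2\}$ separates $\tilde Y$ into a near side (containing $v_0$) and a far side. Put $H$ in general position with respect to $m_e$; because $m_e$ is compact and disjoint from $H(\partial S)$, the preimage $H^{-1}(m_e)$ is a disjoint union of finitely many circles lying in the interior of $S$. Each outermost such circle $\gamma$ bounds a disk $D \subset S$ whose complement contains the boundary of $S$ and thus maps into the near side, forcing $H(\mathrm{int}\,D)$ into the far side. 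Because $\tilde X_C^e$ is simply connected, the restriction $H|_\gamma : S^1 \to \tilde X_C^e \times \{1/2\}$ extends to a map $D \to \tilde X_C^e \times \{1/2\}$, and we replace $H|_D$ by this extension. Distinct edges adjacent to $v_0$ have disjoint far subtrees in $T$, so the outermost disks chosen for different edges are pairwise disjoint, and the modifications amalgamate to a continuous map $H' : S \to N$, where $N := \tilde X_A^0 \cup \bigcup_e (\tilde X_C^e \times [0,1/2])$. Composing with the obvious retraction $\rho : N \to \tilde X_A^0$ that collapses each near half-tube to its base yields $\rho \circ H'$, a homotopy from $r$ to $s$ supported in $\tilde X_A^0$.

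The principal technical obstacle is verifying properness throughout. For $H'$: any compact $K \subset \tilde Y$ meets only finitely many tubes (by local finiteness and cocompactness of the $G$-action), so only finitely many of the potentially infinitely many modified disks can contribute to $(H')^{-1}(K)$, and each contribution lies in a compact disk. For $\rho$: any compact of $\tilde X_A^0$ meets only finitely many attaching loci $\tilde X_C^e \times \{0\}$, so its $\rho$-preimage is contained in a finite union of compact tubes. Once these checks are in place, $\rho \circ H'$ is a proper homotopy from $r$ to $s$ inside $\tilde X_A^0$, contradicting the choice of the rays; hence $\tilde Y$, and so $G$, fails to have semistable fundamental group at infinity.
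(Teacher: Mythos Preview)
Your argument follows the same strategy as the paper's: assume $G$ is semistable, pick rays $r,s$ in the $A$-piece witnessing non-semistability of $A$, take a proper homotopy between them in the $G$-space, and push it back into the $A$-piece by replacing the excursions across the finite, simply connected $C$-pieces. The paper implements this inside the Cayley 2-complex for a specific presentation of $A\ast_CB$, invokes simplicial approximation, and then cites a simplicial excision result (Theorem~6.1 of \cite{MS19}) to produce the disk pairs $(E_j,\alpha_j)$ with $M(\alpha_j)\subset Z_{i(j)}$; the refilling and properness check are then exactly parallel to yours. Your tree-of-spaces/transversality packaging is a perfectly good substitute for that excision theorem and is arguably more self-contained.

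One genuine wrinkle: your sentence ``forcing $H(\mathrm{int}\,D)$ into the far side'' is not correct as stated, since inner circles of $H^{-1}(m_e)$ may lie in $D$ and $H$ can cross back to the near side there; consequently your deduction that outermost disks for \emph{different} edges are disjoint does not follow from ``far subtrees are disjoint''. The fix is painless: take ``outermost'' with respect to the union $\bigcup_e H^{-1}(m_e)$ rather than edge-by-edge. These globally outermost circles automatically bound pairwise disjoint disks, the complement of their union maps into $N$, and each such circle still maps into a single $m_e$, so the filling and retraction go through unchanged. You should also remark that $H$ can be made transverse to all the $m_e$ at once (e.g.\ because $H$ is proper and the $m_e$ form a locally finite family of compacta, so on each compact piece of the strip only finitely many conditions are in play); this is routine but worth a sentence.
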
 

\section{Proof of the Main Theorem}\label{PF}
\noindent{\bf Main Theorem} {\it Suppose $G$ is a graph product on the finite connected graph $\Gamma$ where each vertex group is finitely presented. Then $G$ does not have semistable fundamental group at $\infty$ if and only if there is a vertex $v$ of $\Gamma$ such that: 

(1) $G_v$ does not have semistable fundamental group at $\infty$ and  

(2) the link of $v$ is a complete graph with each vertex group finite.}

\begin{proof} If there is a vertex $v$ of $\Gamma$ such that conditions (1) and (2) of the theorem hold for $v$, then $\langle lk(v)\rangle$ is a finite group. If additionally, $\Gamma=st(v)$ then $G=G_v\times \langle lk(v)\rangle$ and $G$  does not have semistable fundamental group at $\infty$ by Theorem \ref{FI}. If $\Gamma\ne st(v)$, then by Lemma \ref{split}, $G$ visually decomposes as $\langle \Gamma-\{v\}\rangle\ast_{\langle lk(v)\rangle} \langle st(v)\rangle$ where $\langle st(v)\rangle=G_v\times \langle lk(v)\rangle$. Combining this with Theorem \ref{splitF}, we immediately see that $G$ does not have semistable fundamental group at $\infty$. The ``if" portion of the Theorem is proved.

Assume that $G$ is a graph product on the graph $\Gamma$ and no vertex $v$ of $\Gamma$ is such that conditions (1) and (2) of the theorem hold for $v$. We must show that $G$ has semistable fundamental group at $\infty$. If there is such a non-semistable graph product $G$, on a graph $\Gamma$ such that no vertex of $\Gamma$ satisfies conditions (1) and (2) of the Theorem, then assume that $\Gamma$ has as few vertices as possible.  

\begin{lemma} \label{basic}
The graph $\Gamma$ and the group $G$ satisfy the following:

(A) The graph $\Gamma$ is not equal to $st(w)$ for any vertex $w$. 

(B) The group $G$ is 1-ended. In particular for each vertex $w$ of $\Gamma$, $\langle lk(w)\rangle$ is infinite. 

(C) There is at least one vertex $w$ in $\Gamma$ such that $G_w$ is not semistable. 

(D) If $v$ is a vertex of $\Gamma$ and $G_v$ is infinite, then $\langle st(v)\rangle$ is 1-ended and semistable.
\end{lemma}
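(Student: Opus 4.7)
The plan is to leverage the minimality of $\Gamma$: any strict subgraph that still satisfies the ``no bad vertex'' hypothesis of the main theorem must, by minimality, produce a semistable graph product. Each of (A)--(C) will either use a splitting-based combination result (Theorem \ref{MT} via Lemma \ref{split}, Theorem \ref{FI}, or Theorem \ref{prod}) to promote semistability of smaller pieces to semistability of $G$ (contradicting non-semistability of $G$), or will exhibit a vertex of $\Gamma$ witnessing conditions (1) and (2) (contradicting the standing hypothesis on $\Gamma$). Part (D) follows immediately from (B) together with Theorem \ref{prod}.

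For (A), suppose $\Gamma = st(w)$, so $G \cong G_w \times \langle lk(w)\rangle$. If both factors are infinite, they are finitely presented by Lemma \ref{FGP} and Theorem \ref{prod} makes $G$ semistable, a contradiction. If $G_w$ is finite, then $\langle lk(w)\rangle$ has finite index in $G$, and $lk(w)$ inherits the hypothesis: any vertex $u\in lk(w)$ with $lk_{lk(w)}(u)$ complete and with finite vertex groups has $lk_\Gamma(u) = lk_{lk(w)}(u) \cup \{w\}$ still complete (as $w$ is adjacent to every vertex of $lk(w)$) and still with finite vertex groups (as $G_w$ is finite), so a bad vertex in $lk(w)$ would lift to one in $\Gamma$. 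Minimality then gives $\langle lk(w)\rangle$ semistable, and Theorem \ref{FI} promotes this to $G$. If instead $\langle lk(w)\rangle$ is finite and $G_w$ infinite, then $G_w$ has finite index in $G$; by Theorem \ref{FI}, $G$ is semistable iff $G_w$ is, and either outcome contradicts a hypothesis (either $G$ is semistable, or $w$ itself satisfies (1) and (2) in $\Gamma$, since $\langle lk(w)\rangle$ finite is equivalent to $lk(w)$ complete with finite vertex groups).

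For (B), $G$ is infinite (semistability is vacuous for finite groups), so if $G$ is not 1-ended then Varghese's Theorem \ref{OV} applies. Alternative (i) forces $\Gamma$ complete, whence $\Gamma = st(w)$ for any $w$, contradicting (A). Alternative (ii) yields a visual splitting $G = \langle \Gamma_1\rangle \ast_{\langle \Gamma_1\cap\Gamma_2\rangle} \langle \Gamma_2\rangle$ with both $\Gamma_i$ proper subgraphs and $\langle \Gamma_1\cap\Gamma_2\rangle$ finite. Each $\Gamma_i$ inherits the hypothesis: a candidate bad vertex $v \in V(\Gamma_i)$ cannot lie in $\Gamma_1\cap\Gamma_2$ (else $G_v \le \langle \Gamma_1\cap\Gamma_2\rangle$ would be finite and hence semistable), and if $v\in \Gamma_i \setminus \Gamma_{3-i}$ the separating property of $\Gamma_1\cap\Gamma_2$ forces $lk_\Gamma(v) \subseteq V(\Gamma_i)$, so $lk_\Gamma(v) = lk_{\Gamma_i}(v)$ and $v$ witnesses (1) and (2) in $\Gamma$. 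Minimality then makes both $\langle\Gamma_i\rangle$ semistable, and Theorem \ref{MT} (with finite $C$) gives $G$ semistable, a contradiction. For the ``in particular'' clause, if some $\langle lk(w)\rangle$ were finite, then since $\Gamma \neq st(w)$ by (A), Lemma \ref{split} with $\Delta = lk(w)$ splits $G$ nontrivially as $\langle st(w)\rangle \ast_{\langle lk(w)\rangle} \langle \Gamma - \{w\}\rangle$ over a finite group, making $G$ multi-ended and contradicting 1-endedness.

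For (C), assume every vertex group is semistable. By (A), $\Gamma$ is not complete (else $\Gamma = st(w)$), so choose non-adjacent $u,v$ and let $\Delta$ be the full subgraph on $V(\Gamma) \setminus \{u,v\}$. Lemma \ref{split} decomposes $G$ as $\langle\Delta \cup \{u\}\rangle \ast_{\langle\Delta\rangle} \langle\Delta\cup\{v\}\rangle$, and both factor graphs are strictly smaller than $\Gamma$ and trivially inherit the hypothesis (no vertex group is non-semistable). Minimality makes both factors semistable, and Theorem \ref{MT} yields $G$ semistable. Finally, (D) is immediate: by (B), $\langle lk(v)\rangle$ is infinite and finitely presented (Lemma \ref{FGP}), so if $G_v$ is infinite then $\langle st(v)\rangle \cong G_v \times \langle lk(v)\rangle$ is a product of infinite finitely presented groups, hence 1-ended and semistable by Theorem \ref{prod}. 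The main obstacle I anticipate is the inheritance bookkeeping in (B) (and its analogue in the $G_w$-finite subcase of (A)): one must carefully track how links transform when passing to a subgraph and use the finiteness of the amalgam subgroup to trap the vertex groups of any would-be defective vertex sitting in $\Gamma_1\cap\Gamma_2$.
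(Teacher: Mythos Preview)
Your proof is correct and follows essentially the same strategy as the paper: in each part you combine the minimality hypothesis with the same combination results (Theorems \ref{MT}, \ref{FI}, \ref{prod}) and the same visual splittings, arguing contrapositively where the paper argues directly. The only noteworthy deviation is in (C): the paper removes a single vertex $w$ and uses the star decomposition $G=\langle \Gamma-\{w\}\rangle\ast_{\langle lk(w)\rangle}\langle st(w)\rangle$, whereas you pick two non-adjacent vertices $u,v$ and split over $\langle \Gamma-\{u,v\}\rangle$; both decompositions yield strictly smaller factors with vacuously no bad vertex, so the difference is cosmetic.
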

\begin{proof} (Of Part (A))
Assume $\Gamma=st(w)$.  Then $G=G_w\times \langle lk(w)\rangle$. Since $G$ is not semistable, Theorem \ref{prod} implies either $G_w$ or  $\langle lk(w)\rangle$ is finite (and so the other has finite index in $G$).  If $\langle lk(w)\rangle$ is finite, then by Theorem \ref{FI}, $G_w$ is not semistable. But then $w$ satisfies conditions (1) and (2) of the Theorem - contrary to hypothesis. 

 If $G_w$ is finite, then $\langle lk(w)\rangle$ has finite index in $G$ and so $\langle lk(w)\rangle$ is not semistable. Let $lk_1(v)$ denote the link of any vertex $v$ in $lk(w)$. By the minimality of $\Gamma$, there is a vertex $v\in lk(w)$ such that $G_v$ is not semistable and $\langle lk_1(v)\rangle$ is finite. But, then each vertex group of the complete graph $lk(v)=lk_1(v)\cup \{w\}$ is finite and $v$ satisfies conditions (1) and (2) - contrary to hypothesis. 
 Instead, part (A) is verified.   
 
 (Part (B)) Suppose $G$ has more than 1-end.  Part (A) implies $\Gamma$ is not complete. Theorem \ref{OV} implies there are non-empty full subgraphs $\Gamma_1$ and $\Gamma_2$ of $\Gamma$ such that $\Gamma_1\cup \Gamma_2=\Gamma$, $\Gamma_1\cap \Gamma_2$ separates $\Gamma$ and $\langle \Gamma_1\cap\Gamma_2\rangle$ is finite. Since $G=\langle \Gamma_1\rangle\ast_{\langle \Gamma_1\cap\Gamma_2\rangle} \langle\Gamma_2\rangle$, Theorem \ref{MT} implies either $\langle \Gamma_1\rangle$ or $\langle \Gamma_2\rangle$ is not semistable. Assume $\langle \Gamma_1\rangle$ is not semistable. Let $lk_1$ denote link in $\Gamma_1$. By the minimality of $\Gamma$ there is a vertex $v$ of $\Gamma_1$ such that $G_v$ is not semistable and $\langle lk_1(v)\rangle$ is finite. Since each vertex group of $\Gamma_1\cap \Gamma_2$ is finite, $v\not\in \Gamma_1\cap\Gamma_2$. Since there is no edge connecting a vertex of $\Gamma_1-(\Gamma_1\cap\Gamma_2)$ to a vertex of $\Gamma_2-(\Gamma_1\cap\Gamma_2)$, $lk_1(v)=lk(v)$. Then $v$ satisfies conditions (1) and (2) in $\Gamma$ - contrary to hypothesis.   Part (B) is verified. 

(Part (C)) Next assume $G_w$ is semistable for every vertex $w$ of $\Gamma$. If $w$ a vertex of $\Gamma$, then by Part (A), $\langle \Gamma-\{w\}\rangle\ne lk(w)$ and by Lemma \ref{split}, $G=\langle \Gamma-\{w\}\rangle \ast_{\langle lk(w)\rangle} \langle st(w)\rangle$. Since every vertex group of $\Gamma$ is semistable, our minimality condition on $\Gamma$ implies both $\langle \Gamma-\{v\}\rangle$ and $\langle st(v)\rangle$ are semistable. By Theorem \ref{MT}, $G$ is semistable, contrary to assumption. Instead, Part (C) is verified.

(Part (D)) Assume $G_v$ is infinite. By Part (B), $\langle lk(v)\rangle$ is infinite. By Theorem \ref{prod}, the group $\langle st(v)\rangle=G_v\times \langle lk(v)\rangle$ is 1-ended and  semistable. 
\end{proof}

\begin{lemma} \label{adj} 
If $v$ is a vertex of $\Gamma$ then 
$v$ is adjacent to a vertex $w$ such that $G_w$ is not semistable. 
\end{lemma}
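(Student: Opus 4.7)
The plan is to argue by contradiction: assume some vertex $v$ of $\Gamma$ has the property that $G_w$ is semistable for every $w \in lk(v)$, and derive a contradiction with the standing minimality and non-semistability of $\Gamma$. The engine is the visual splitting of $G$ about $v$, combined with Theorems \ref{MT} and \ref{FI} and the facts already collected in Lemma \ref{basic}.

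First I invoke Lemma \ref{basic}(A) to see that $\Gamma - st(v)$ is non-empty, and then apply Lemma \ref{split} with separating subgraph $lk(v)$ to obtain
$$G = \langle st(v)\rangle \ast_{\langle lk(v)\rangle} \langle \Gamma - \{v\}\rangle.$$
Since $\langle lk(v)\rangle$ is finitely generated (Lemma \ref{FGP}), Theorem \ref{MT} implies that if both amalgamation factors were semistable then $G$ would be semistable. As $G$ is not semistable, at least one of $\langle st(v)\rangle$ and $\langle \Gamma - \{v\}\rangle$ must fail to be semistable, and I split into two cases.

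In the case that $\langle \Gamma - \{v\}\rangle$ fails, I apply the minimality of $\Gamma$ to the graph product on $\Gamma - \{v\}$: this produces a vertex $w''$ with $G_{w''}$ non-semistable and $lk_{\Gamma - \{v\}}(w'')$ a complete graph of finite vertex groups. If $w''$ is adjacent to $v$ in $\Gamma$, then $G_{w''}$ is semistable by the contradiction hypothesis; if not, then $lk_\Gamma(w'') = lk_{\Gamma - \{v\}}(w'')$, so $w''$ satisfies conditions (1) and (2) of the main theorem in $\Gamma$, contradicting the choice of $\Gamma$. In the case that $\langle st(v)\rangle$ fails, Lemma \ref{basic}(D) forces $G_v$ to be finite, so $\langle st(v)\rangle = G_v \times \langle lk(v)\rangle$ has $\langle lk(v)\rangle$ as a finite-index subgroup and hence $\langle lk(v)\rangle$ is non-semistable by Theorem \ref{FI}. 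Applying minimality once more to the graph product on the strictly smaller graph $lk(v)$ produces a vertex $w' \in lk(v)$ with $G_{w'}$ non-semistable, contradicting the contradiction hypothesis.

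The main obstacle I anticipate is the bookkeeping in the first case: one must be careful that the link condition guaranteed by minimality is stated within the ambient $\Gamma - \{v\}$, and then verify that the only way adding $v$ back could spoil this condition in $\Gamma$, namely $w''$ being adjacent to $v$, is exactly the case already ruled out by the contradiction hypothesis. A minor subtlety is that $\Gamma - \{v\}$ or $lk(v)$ could be disconnected when minimality is applied; this is handled by passing (via Theorem \ref{splitF}) to a connected component of the relevant subgraph whose graph product is non-semistable, and noting that links within a component coincide with links within the whole subgraph.
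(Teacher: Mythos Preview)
Your argument is correct and follows essentially the same route as the paper: the visual splitting $G=\langle st(v)\rangle\ast_{\langle lk(v)\rangle}\langle\Gamma-\{v\}\rangle$ together with Theorem~\ref{MT} and the minimality hypothesis, with only a cosmetic difference in how the $\langle st(v)\rangle$ factor is handled (the paper cases on whether $G_v$ is semistable and applies minimality to $st(v)$ directly, whereas you deduce $G_v$ finite via Lemma~\ref{basic}(D) and pass to $lk(v)$). One small correction: in your closing remark about disconnected subgraphs, the relevant citation for passing to a non-semistable component is Theorem~\ref{MT} (contrapositive), not Theorem~\ref{splitF}.
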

\begin{proof}
Suppose $v$ is a vertex of $\Gamma$ and every vertex $w\in lk(v)$ is such that $G_w$ is semistable. Let $\Gamma_1$ be the full subgraph of $\Gamma$ on the vertices of $\Gamma-\{v\}$. If $u$ is a vertex of $\Gamma_1$ such that $G_u$ is not semistable, then $u\in\Gamma_1-lk(v)$, and so the link of $u$ in $\Gamma_1$ agrees with the link of $u$ in $\Gamma$. By the minimality of $\Gamma$, the group $\langle \Gamma_1\rangle$ is semistable. 

If $G_v$ is semistable, and $u$ is a vertex of $st(v)$ then $G_u$ is semistable.  The minimality of $\Gamma$ and Lemma \ref{basic}(A) imply that $\langle st(v)\rangle$ is semistable.  If $G_v$ is not semistable, then Lemma \ref{basic}(D) implies that $\langle st(v)\rangle $ is semistable. We have that in any case, both $\langle \Gamma_1\rangle$ and $\langle st(v)\rangle$ are semistable.  As $G=\langle \Gamma_1\rangle\ast_{\langle lk(v)\rangle} \langle st(v)\rangle$, $G$ is semistable by Theorem \ref{MT} - contrary to hypothesis. 
\end{proof}

\begin{lemma} \label{star} 
Suppose $v$ is a vertex of $\Gamma$ such that $G_v$ is not semistable. Let $\Gamma_1$ be the full subgraph of $\Gamma$ on the vertices of $\Gamma-\{v\}$. The group $G_{\Gamma_1}$ is not semistable.
\end{lemma}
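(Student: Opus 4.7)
The plan is to argue by contradiction, leveraging the minimality of $\Gamma$ together with the combination theorem for amalgamated products with finitely generated edge group (Theorem \ref{MT}). The key observation is that, since $G_v$ fails to be semistable, $G_v$ must in particular be infinite (a finite group $G_v$ has a finite presentation 2-complex whose universal cover is compact, and hence admits no proper rays, so is vacuously semistable).

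With $G_v$ infinite in hand, I would set up the amalgamated product decomposition. By Lemma \ref{basic}(A), $\Gamma\neq st(v)$, so $V(\Gamma)\setminus V(st(v))$ is non-empty. Applying Lemma \ref{split} with $\Delta=lk(v)$, $A=\{v\}$, and $B=V(\Gamma)\setminus V(st(v))$ yields the visual decomposition
$$G \;=\; \langle st(v)\rangle \ast_{\langle lk(v)\rangle} \langle \Gamma_1\rangle.$$
By Lemma \ref{FGP}, both $\langle st(v)\rangle$ and $\langle \Gamma_1\rangle$ are finitely presented, and $\langle lk(v)\rangle$ is finitely generated. Lemma \ref{basic}(D), which applies precisely because $G_v$ is infinite, gives that $\langle st(v)\rangle$ is (1-ended and) semistable.

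Now I close the argument by contradiction. Suppose $\langle \Gamma_1\rangle$ were semistable. Then both factors in the displayed amalgam are finitely presented and semistable and the amalgamating subgroup $\langle lk(v)\rangle$ is finitely generated, so Theorem \ref{MT} forces $G$ to have semistable fundamental group at $\infty$. This contradicts the standing assumption that $G$ is a minimal non-semistable counterexample. Hence $\langle \Gamma_1\rangle$ is not semistable, as required. The proof is short essentially because all the real work has been done in Lemma \ref{basic}; the only point that requires attention is the observation that $G_v$ non-semistable implies $G_v$ infinite, which is what unlocks Lemma \ref{basic}(D) and thus the hypotheses of Theorem \ref{MT}.
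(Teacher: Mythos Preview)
Your proof is correct and follows essentially the same route as the paper's own argument: assume $\langle\Gamma_1\rangle$ is semistable, invoke Lemma~\ref{basic}(D) to get $\langle st(v)\rangle$ semistable, and then apply Theorem~\ref{MT} to the visual splitting $G=\langle\Gamma_1\rangle\ast_{\langle lk(v)\rangle}\langle st(v)\rangle$ to contradict the non-semistability of $G$. You supply a bit more detail than the paper (explicitly noting that $G_v$ must be infinite to unlock Lemma~\ref{basic}(D), and citing Lemma~\ref{basic}(A) and Lemma~\ref{split} for the decomposition), but the structure is identical.
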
 
\begin{proof}
Suppose $G_{\Gamma_1}$ is semistable. Note that $G=G_{\Gamma_1}\ast_{\langle lk(v)\rangle} \langle st(v)\rangle$. Since $st(v)$ is semistable (Lemma \ref{basic} (D)), Theorem \ref{MT} implies $G$ is semistable, the desired contradiction.
\end{proof}

Let $v$ be a vertex of $\Gamma$ such that $G_v$ is not semistable. Let $\Gamma_1$ be the full subgraph of $\Gamma$ on the vertices of $\Gamma-\{v\}$ and $G_1=\langle\Gamma_1\rangle$. For $w$ a vertex of $\Gamma_1$, let $lk_1(w)$ be the link of $w$ in $\Gamma_1$. 

$(\ast)$ If $w$ is a vertex of $\Gamma_1$, then $lk_1(w)=lk(w)$ if $w\not\in lk(v)$. 

By Lemma \ref{star}, $G_1$ is not semistable. Since $\Lambda_1$ has fewer vertices than $\Gamma$, there is a vertex $w$ of $\Gamma_1$ such that $G_w$ is not semistable and $\langle lk_1(w)\rangle$ is finite. By $(\ast)$, $w\in lk(v)$ and so  
$$lk(w)=\{v\}\cup lk_1(w)$$ 

Let $\Gamma_2$ be the full subgraph of $\Gamma$ on the vertices of $\Gamma-\{w\}$ and $G_2=\langle \Gamma_2\rangle$. For $u$ a vertex of $\Gamma_2$, let $lk_2(u)$ be the link of $u$ in $\Gamma_2$. As argued above (for $v$) there is a vertex $z\in lk(w)$ such that $G_z$ is not semistable and $\langle lk_2(z)\rangle$ is finite. The only vertex of $lk(w)(=\{v\}\cup lk_1(v))$ with non-semistable group is $v$. So $\langle lk_2(v)\rangle$ is finite and 
$$lk(v)=\{w\}\cup lk_2(v)$$
By Lemma \ref{basic}(D), the groups $\langle st(v)\rangle $ and $\langle st(w)\rangle $ are 1-ended and semistable. Since $v,w\in st(v)\cap st(w)$, Theorem \ref{MM} implies the group  $S=\langle st(v)\cup st(w)\rangle$ is 1-ended and semistable. In particular, $G\ne S$. Let $\Gamma_3$ be the full subgraph of $\Gamma$ on the vertices of $\Gamma-\{v,w\}$. For $z$ a vertex of $\Gamma_3$, let $lk_3(z)$ be the link of $z$ in $\Gamma_3$. Let $G_3=\langle \Gamma_3\rangle$. If $z$ is a vertex of $\Gamma_3$ such that $G_z$ is not semistable, then $z\not\in lk(w)\cup lk(v)$ (this set only contains vertices whose groups are finite). That means $lk(z)=lk_3(z)$. By the minimality of $\Gamma$, $G_3$ is semistable. Since $(lk(w)\cup lk(v))-\{v,w\}$ separates $w$ and $v$ from the vertices of $\Gamma-(st(v)\cup st(w))$, we have 
$$G=G_3\ast_{\langle (lk(w)\cup lk(v))-\{v,w\}\rangle}(st(v)\cup st(w))$$ 
By Theorem \ref{MT}, $G$ is semistable - the desired contradiction.
\end{proof}

\section{Generalizations to Finitely Generated Groups}\label{FG}

In \cite{M4}  we extended the definition of semistability at $\infty$ to finitely generated groups. Nearly all of the major  semistability results for finitely presented groups generalize to finitely generated groups. 

If a group $G$ has finite presentation $\mathcal P=\langle S:R\rangle$ the standard 2-complex $X(\mathcal P)$ for $\mathcal P$ has a single vertex $v$ and one loop at $v$ (labeled $s$) for each $s\in S$. If $r\in R$ then a 2-cell is attached to this wedge of loops at $v$ according to the labeling of $r$. The resulting 2-complex is $X(\mathcal P)$ and $\pi_1(X)=G$. The universal cover $\tilde X$ of $X$ is the {\it Cayley 2-complex} for $\mathcal P$. The 1-skeleton of $\tilde X$ is the Cayley graph $\Lambda(S,G)$.  At each vertex $g\in \Gamma$ there is a 2-cell with boundary equal to an edge path labeled by $r$ for each $r\in R$. The group $G$ has semistable fundamental group at $\infty$ if and only if $\tilde X(\mathcal P)$ has semistable fundamental group at $\infty$. This is independent of choice of finite presentation $\mathcal P$. 
 
If $G$ is a finitely generated group with finite generating set $S$, let 
$\Lambda(S,G)$ be the Cayley graph for $(S,G)$.  Suppose $R$ is a finite collection of relations for $(S,G)$ (so $S$ is a subset of the free group on $S$). The 2-complex $\Lambda(S, G, R)$ has 1-skeleton $\Lambda(S,G)$. For each vertex $w\in \Lambda(G,S)$ and each $r\in R$ there is a 2-cell at $w$ with edge path boundary labeled by $r$. If $\mathcal P=\langle S:R\rangle$ were a finite presentation for $G$, then $\Gamma(S,G, R)$ would be the Cayley 2-complex for $\mathcal P$. If there is a finite collection of $S$-relations $R$ such that $\Gamma(S,G, R)$ has semistable fundamental group at $\infty$, then we say $G$ has semistable fundamental group at $\infty$. Again, this notion is independent of finite generating set for $G$ - If $S_1$ and $S_2$ are finite generating sets for $G$ then there is a finite set of $S_1$-relations $R_1$  for $G$ such that $\Gamma(S_1,G, R_1)$ has semistable fundamental group at $\infty$ if and only if there is a finite set of $S_2$-relations $R_2$ for $G$ such that $\Gamma(S_2,G,R_2)$ has semistable fundamental group at $\infty$ (see \cite{M4})
The main advantage of considering finitely generated groups with semistable fundamental group at $\infty$ is that they can be used to show certain finitely presented groups have semistable fundamental group at $\infty$ (see for instance \cite{M4} where solvable groups are shown to have semistable fundamental group at $\infty$.)

In order to prove our Main Theorem in the setting of finitely generated groups we would simply need the corresponding results for Theorems \ref{MT}, \ref{MM}, \ref{FI}, \ref{prod} and \ref{splitF} where the finitely presented hypothesis is relaxed to finitely generated.  With the exception of Theorem \ref{MT}, it is an easy exercise to check that the proofs of the other results extend to the finitely generated setting. The proof of Theorem \ref{MT} is a long technical argument and not so simple to analyze. 

\section{Proper Relative Simplicial Approximation}\label{SimpApp} 
In this section we review two results from \cite{MS19} that will help prove Theorem \ref{splitF}. All spaces are simplicial complexes and subcomplexes are full subcomplexes of the over complex. If $X$ is a simplicial complex, then we say a subcomplex $Z$ {\it separates} a vertex $v\in X-Z$ from a subcomplex $Y$ of $X$ if any edge path in $X$ from $v$ to a vertex of $Y$ contains a vertex of $Z$. 
Our model for the following theorem is when the space $X$ is the Cayley 2-complex for a group split non-trivially as $G=A\ast_CB$ where $A$ and $B$ are finitely presented and $C$ is finitely generated. 

We are interested in proper homotopies $M:[0,\infty)\times [0,1]\to  X$ of proper edge path rays  $r$ and $s$ into a connected locally finite simplicial 2-complex $X$, where $r$ and $s$ have image in a subcomplex $Y$ of $X$. Simplicial approximation allows us to assume that $M$ is simplicial (see Lemma \ref{simpA}). If $G$ is the amalgamated product $A\ast_CB$, then the space $X$ is the Cayley 2-complex for $G$, and the space $Y$ is the Cayley subcomplex of $X$ for $A$.  Say $\{Z_i\}_{i=1}^\infty$ is a collect of connected subcomplexes of $Y$ such that only finitely many $Z_i$ intersect any compact subset of $X$, and such that each $Z_i$ separate vertices of $Y$ from vertices of $X-Y$. In the $A\ast_CB$ setting, the $Z_i$ are the $aC$ cosets for each $a\in A$. 

The next result describes how to simplicially excise the parts of $[0,\infty)\times [0,1]$ that are not mapped into $Y$ (and perhaps a bit more is removed). What is removed from  $[0,\infty)\times [0,1]$ is a disjoint union of open sets $E_j$ for $j\in J$, each homeomorphic to $\mathbb R^2$ and $M([0,\infty)\times [0,1]-(\cup_{j\in J} E_j))\subset Y$. Also, $M$ maps the topological boundary of each $E_j$ into some $Z_i$.  When $E_j$ is bounded in $[0,\infty)\times [0,1]$ (contained in a compact set),  there is an embedded edge path loop $\alpha_j$ in $[0,\infty)\times [0,1]$ that bounds $E_j$, and $M(\alpha_j)$ has image in one of the $Z_i$. In this case the $E_j$ and $\alpha_j$ form a finite subcomplex of $[0,\infty)\times [0,1]$ homeomorphic a closed ball which contains a certain equivalence class of triangles that are mapped into $X-Y$. When $E_j$ is unbounded, $\alpha_j$ is an embedded proper edge path line that bounds $E_j$. Again $M(\alpha_j)$ has image in one of the $Z_i$ and $E_j$ and $\alpha_j$ form a subcomplex of $[0,\infty)\times [0,1]$ homeomorphic to the closed upper half plane.  Again, in this case $E_j$ will contain a certain equivalence class of triangles, each of which is mapped into $X-Y$. 
 
\begin{definition}
We call the pair $(E,\beta)$ a {\it disk pair} in the simplicial complex $[0,\infty)\times [0,1]$ if $E$ is an open subset of $[0,\infty)\times [0,1]$ homeomorphic to $\mathbb R^2$, $E$ is a union of cells, $\alpha$ is an embedded edge path bounding $E$ and $E$ union $\alpha$ is a closed subspace of $[0,\infty)\times [0,1]$ homeomorphic to a closed ball or a closed half space in $[0,\infty)\times [0,1]$. When $\alpha$ is finite, we say the disk pair is finite, otherwise we say it is unbounded. Note that if $(E,\beta)$ is a disk pair, then $\beta$ is collared in $E$.
\end{definition}

When there is enough information about the $Z_i$, we intend to apply the theorem to alter the homotopy $M$ to a proper homotopy in $Y$. 

\begin{theorem} \label{excise} (Theorem 6.1, \cite{MS19}) 
Suppose $M:[0,\infty)\times [0,1]\to  X$ is a proper simplicial homotopy $rel\{\ast\}$ of proper edge path rays  $r$ and $s$ into a connected 
locally finite simplicial 2-complex $X$, where $r$ and $s$ have image in a subcomplex $Y$ of $X$. Say $\mathcal Z=\{Z_i\}_{i=1}^\infty$ is a collection of connected subcomplexes of $Y$ such that only finitely many $Z_i$ intersect any compact subset of $X$. 
Assume that each vertex of $X-Y$ is separated from $Y$ by exactly one $Z_i$. 

Then there is an index set $J$ and for each $j\in J$, there is a disk pair $(E_j,\alpha_j)$ in  $[0,\infty)\times [0,1]$ where the $E_j$ are disjoint, $M$ maps $\alpha_j$  to $Z_{i(j)}$ (for some $i(j)\in \{1,2,\ldots\}$) and $M([0,\infty)\times [0,1]-\cup_{j\in J} E_j)\subset Y$. 
\end{theorem}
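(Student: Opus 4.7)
The plan is to construct the disk pairs from the connected components of $U := M^{-1}(X - Y)$ in the planar half-strip $H := [0,\infty) \times [0,1]$. Preliminary observations: since $Y$ is a subcomplex of $X$, $U$ is open in $H$; since $M$ is simplicial and $Y$ is full, $U$ is a union of open cells of $H$; since $r,s$ map into $Y$ and the homotopy is rel basepoint, $M$ sends the manifold boundary $\partial H$ into $Y$, so $U \subset \mathrm{int}(H)$; and properness of $M$ makes $U$ locally finite in $H$. Let $\{C\}$ denote the connected components of $U$.

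The main lemma is that each component $C$ has a single index $i(C)$ with $M(\bar C \setminus C) \subset Z_{i(C)}$. I would prove this in two stages. First, for any open simplex $\tau$ of $X - Y$, $\bar\tau \cap Y$ lies in a single $Z_{i(\tau)}$: by fullness of $Y$, $\bar\tau$ contains a vertex $v \in X - Y$, and any other vertex $w \in \bar\tau \cap Y$ is joined to $v$ by an edge of $\bar\tau$ which cannot lie in $Y$ (else by fullness $v \in Y$), forming an edge path from $v$ to $Y$ whose only $Y$-vertex is $w$; the separation hypothesis then forces $w \in Z_{i(v)} =: Z_{i(\tau)}$, and fullness of $Z_{i(\tau)}$ extends the conclusion from vertices to edges. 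Second, if two open cells $\sigma, \sigma' \subset C$ share a common face $c \in U$, the same fullness argument applied to $\bar c$ gives $\bar c \cap (X - Y) \ne \emptyset$, forcing $i(M(\sigma)) = i(M(\sigma'))$; propagating via path-connectedness of $C$ determines $i(C)$. Since every cell in $\bar C \setminus C$ is a face of some $\sigma \subset C$, we conclude $M(\bar C \setminus C) \subset \overline{M(\sigma)} \cap Y \subset Z_{i(C)}$.

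The disk pair extraction then uses planar topology in $H$. The closure $\bar C$ is a locally finite simplicial subcomplex of the planar half-strip $H$, so the topological boundary $\partial C := \bar C \setminus C$ is a locally finite $1$-dimensional subcomplex whose $M$-image lies in $Z_{i(C)}$ by the main lemma. Planar topology (the Jordan curve theorem for bounded components, proper-arc analogs for unbounded ones) singles out an ``outermost'' embedded edge path loop or proper edge path line $\alpha_C$ in $\partial C$ that separates $C$ (together with any holes it contains) from the unbounded direction of $H$; letting $E_C$ be the open region in $H$ bounded by $\alpha_C$, the pair $(E_C, \alpha_C)$ is a disk pair with $M(\alpha_C) \subset Z_{i(C)}$. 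For different components $C, C'$, the associated disk pairs are either disjoint or nested (the latter when some $C'$ sits in a hole of $E_C$), so keeping only the outermost pairs yields the required disjoint family $\{(E_j, \alpha_j)\}_{j \in J}$ with $M(H \setminus \bigcup_j E_j) \subset Y$. The main obstacle is this planar topology step: verifying that $\alpha_C$ is a single embedded curve rather than a branched subgraph of $\partial C$ requires care at vertices where $C$ might reconnect around $\partial C$, and local finiteness from properness of $M$ is needed to keep the family well-behaved at infinity.
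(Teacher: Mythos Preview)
The paper does not prove this statement. Theorem~\ref{excise} is quoted from \cite{MS19} (as Theorem~6.1 there) and used as a black box in the proof of Theorem~\ref{splitF}; no argument for it appears anywhere in the present paper. There is therefore nothing here to compare your proposal against.

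For what it is worth, your outline is a plausible blueprint. The main lemma---that the frontier of each component $C$ of $M^{-1}(X\setminus Y)$ maps into a single $Z_{i(C)}$---is correct, and your sketch becomes complete once you add the observation that two vertices $v,v'\in X\setminus Y$ joined by an edge share the same separating $Z_i$ (prefix any edge path $v'\to Y$ by the edge $vv'$ and invoke uniqueness); this makes $i(\tau)$ well defined independently of the chosen vertex. The real content, as you yourself flag, is the planar extraction: one must show that for each component $C$ the boundary of the component of $H\setminus C$ containing $\partial H$ is a single embedded edge path (a circle or a proper line), and that the filled regions so obtained are pairwise disjoint or nested. This is reasonable in the half-strip because $\partial H$ is connected and the triangulation is locally finite, but it is the substantive step and your proposal stops short of carrying it out.
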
 

\begin{remark}\label{Tfacts} (Remark 6.2, \cite{MS19}) 
Assume the hypotheses of Theorem \ref{excise}. If $e$ is an edge of $[0,\infty)\times [0,1]$ then there are at most two $j\in J$   such that $e$ is an edge of $\alpha_j$. Hence if $K$ is a finite subcomplex of $[0,\infty)\times [0,1]$, there are only finitely many $j\in J$ such that $\alpha_j$ has an edge in $K$. 

If $Z_i\in \mathcal Z$ is a finite subcomplex of $X$  and $M$ maps $\alpha_j$ to $Z_i$ then since $M$ is proper, $\alpha_j$ is a circle (and not a line), so that $E_j$ is bounded in $[0,\infty)\times [0,1]$. Also since $M$ is proper,  $M^{-1}(Z_i)$ is compact, and so $M$ maps only finitely many $\alpha_j$ to $Z_i$.  
\end{remark}

The Simplicial Approximation Theorem (see [Theorem 3.4.9, \cite{Span66}]) applies to finite simplicial complexes. The next result is an elementary extension of that result. 

\begin{lemma} \label{simpA} (Lemma 6.5, \cite{MS19}) 
Suppose $M:[0,\infty)\times [0,1]\to X$ is a proper map to a simplicial complex $X$ where $M_0$ and $M_1$ are (proper) edge paths and $M(0,t)=M(0,0)$ for all $t\in [0,1]$. Then there is a proper simplicial approximation $M'$  of $M$ that agrees with $M$ on $([0,\infty)\times \{0,1\} )\cup (\{0\}\times [0,1])$. 
\end{lemma}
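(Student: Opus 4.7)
The plan is to extend the classical Simplicial Approximation Theorem (Spanier 3.4.9) from finite domains to the proper setting via an inductive strip-by-strip construction. Exhaust $X$ by finite subcomplexes $X_1\subset X_2\subset\cdots$ (with $X_n$ in the interior of $X_{n+1}$), and use properness of $M$ to choose integers $0=N_0<N_1<N_2<\cdots$ so that $M([N_n,\infty)\times[0,1])\cap X_{n-1}=\emptyset$. Set $K_n=[N_{n-1},N_n]\times[0,1]$; these are finite subcomplexes whose union is $[0,\infty)\times[0,1]$, and they overlap only along the vertical edges $\{N_n\}\times[0,1]$.

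Next I would build $M'$ inductively on the $K_n$. On $K_1$, take a product subdivision fine enough that each closed cell of the subdivided $K_1$ is carried by $M$ into the open star of some vertex of $X$; this is possible by compactness of $K_1$ and local finiteness of $X$ (the standard Lebesgue-number argument). Because $M$ is already a simplicial edge path on the top edge $[0,N_1]\times\{1\}$, the bottom edge $[0,N_1]\times\{0\}$, and the constant left edge $\{0\}\times[0,1]$, the relative form of the classical theorem produces a simplicial approximation $M'_1$ of $M|_{K_1}$ that equals $M$ on these three edges. For the inductive step, having constructed $M'$ on $K_1\cup\cdots\cup K_n$ agreeing with $M$ where required, subdivide $K_{n+1}$ as a product of the already-fixed subdivision on $\{N_n\}\times[0,1]$ and a fine enough subdivision of $[N_n,N_{n+1}]$, then apply the relative simplicial approximation theorem on $K_{n+1}$ keeping the top edge, bottom edge, and left edge fixed (on the left edge $M'_n$ is already simplicial, and on the horizontal edges $M$ is an edge path). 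The maps glue to a simplicial $M'\colon[0,\infty)\times[0,1]\to X$ agreeing with $M$ on $([0,\infty)\times\{0,1\})\cup(\{0\}\times[0,1])$.

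The main obstacle, and the only substantive step beyond citing the classical theorem, is verifying that $M'$ is proper. The key feature of simplicial approximation is that for each vertex $x$ of the subdivided domain, $M'(x)$ is chosen to be a vertex of $X$ whose open star contains $M(x)$; thus $M(x)$ and $M'(x)$ lie in a common closed simplex of $X$, and for every closed simplex $\sigma$ of the subdivided domain, $M'(\sigma)$ is contained in the union of closed simplices of $X$ that meet $M(\sigma)$. Given a finite subcomplex $C\subset X$, let $C^+$ denote the (finite) union of all closed simplices of $X$ that share a vertex with $C$. Then $(M')^{-1}(C)\subset M^{-1}(C^+)$, which is compact because $M$ is proper and $C^+$ is a finite (hence compact) subcomplex. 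Therefore $M'$ is proper, completing the construction. The inductive matching of subdivisions across shared vertical edges is the main bookkeeping subtlety, and it is handled by always taking product subdivisions whose restriction to $\{N_n\}\times[0,1]$ is already determined by the previous step.
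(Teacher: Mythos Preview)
The paper does not actually prove this lemma; it is quoted from \cite{MS19} (as Lemma~6.5 there) and the proof environment that immediately follows it is explicitly labeled ``(Of Theorem~\ref{splitF})''. So there is nothing in the present paper to compare your argument against, and your write--up would in fact supply a proof where the paper gives none.

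Your overall architecture---exhaust the strip by compact blocks $K_n$, apply a relative simplicial approximation on each block keeping the already--simplicial boundary pieces fixed, and deduce properness from the closed--star containment $(M')^{-1}(C)\subset M^{-1}(C^{+})$---is the standard one and is correct. The properness argument in your last paragraph is clean and needs no change.

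There is, however, a genuine gap in the subdivision step. You propose to triangulate $K_{n+1}$ as a \emph{product} of the fixed subdivision on $\{N_n\}\times[0,1]$ with a fine subdivision of $[N_n,N_{n+1}]$. Two things go wrong. First, the mesh of a product triangulation is bounded below by the mesh of either factor, so fixing the vertical factor prevents you from satisfying the star--cover (Lebesgue) condition near the left edge. Second, and more seriously, the top and bottom edges carry the unit--interval subdivision coming from the edge paths $M_0$, $M_1$; a genuine product subdivision of $K_{n}$ would refine those horizontal edges, and once you insert a new vertex $v$ in $[k,k+1]\times\{0\}$ the value $M(v)$ is an interior point of an edge of $X$, so any simplicial approximation must send $v$ to a vertex of $X$ and hence cannot agree with $M$ there. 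In short, product subdivisions cannot simultaneously be fine enough for the star condition and leave $M$ simplicial on the prescribed boundary.

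What you actually need is the \emph{relative} simplicial approximation theorem in Zeeman's form: given $f\colon K\to L$ with $f|_{K_0}$ already simplicial, there is a subdivision of $K$ that does not touch $K_0$ and a simplicial approximation of $f$ equal to $f$ on $K_0$. This is stronger than the version in Spanier that you cite, and its subdivision is not a product but a relative (e.g.\ starring) construction. With Zeeman's theorem replacing the product step, your inductive construction on the $K_n$ and your properness argument go through unchanged.
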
 

 \begin{proof} {\bf (Of Theorem \ref{splitF})} 
 Suppose the  groups $A$, $B$ and $G=A\ast_CB$ are finitely presented, $C$ is finite, $G$ has semistable fundamental group at $\infty$ and $A$ does not. Choose finite presentations $\mathcal P_1=\langle \mathcal A\cup \mathcal C:\mathcal R_1\rangle$ and $\mathcal P_2=\langle \mathcal B\cup \mathcal C':\mathcal R_2\rangle$ for $A$ and $B$ respectively, where $\mathcal C=\{c_1,\ldots, c_n\}$ and $\mathcal C'=\{c_1',\ldots, c_n'\}$ generate $C$ in $A$ and $B$ respectively. Also assume that in $G$, $c_i=c_i'$ for all $i\in\{1,\ldots, n\}$ and $\mathcal R_1$ contains relations $Q_1$ such that $\mathcal P_3=\langle \mathcal C:Q_1\rangle$ finitely presents  $C$. 

Let $S=\mathcal A\cup \mathcal B\cup \mathcal C$ and $R=\mathcal R_1\cup \mathcal R_2$. Then a presentation for $G$ is $\mathcal P=\langle S:R\rangle$ where each $c_i'$ in a relation of $R_2$ is replaced by $c_i$. Let $X$ be the standard 2-complex for $\mathcal P$ (one vertex $\ast$, a labeled loop at $\ast$ for each element of $S$, and a 2-cell attached at $\ast$ according to  each relation $r\in R$).

Let $\tilde X$ be the universal cover of $X$. The space $\tilde X$ is the Cayley 2-complex for $(G,\mathcal P)$ with 1-skeleton the (labeled) Cayley graph for $(G, S)$. Each vertex of $\tilde X$ is mapped by the covering map $p:\tilde X\to X$ to $\ast$, each edge of $\tilde X$ is mapped to a loop with the same label and each 2-cell of $\tilde X$ is mapped to a 2-cell with the same boundary labels. 

Let  $\ast$ be the identity vertex of $\tilde X$, let $Y_\ast$ be the maximal (full) subcomplex of $\tilde X$ containing $\ast$ and such that each edge of $Y_\ast$ is labeled by a letter in $\mathcal A\cup\mathcal C$. The space  $Y_\ast$ is the (simply connected) Cayley 2-complex for $(A, \mathcal P_1)$.  

If $v\in A$, let $Z_v$ be the maximal full subspace of $Y_\ast$ containing $v$ such that each edge of $Z_v$ is labeled by an element of $C$, then $Z_v\subset Y_\ast$, and $Z_\ast$ is a copy of the Cayley 2-complex of $(C,\mathcal P_3)$. Since $C$ is a finite group, $Z_v$ is a finite (simply connected) complex.  

We begin with a triangulation of $\tilde X$ that refines our cellular structure. We may assume there are no relations in $R$ of length 1.
Add a vertex to the interior of every edge (so the edge is exchanged for two edges). Now, every  2-cell is bounded by at least 4 edges. 
Triangulate each 2-cell, by selecting a point in the interior of each 2-cell to be a vertex, and defining edges from the new vertex to each of the other vertices of  that cell. 
This defines a triangulation of $\tilde X$. 

There are proper edge path rays $r$ and $s$ in $Y$, such that $r$ and $s$ are based at $\ast$, converge to the same end of $Y$  and are not properly homotopic in $Y$ (since $A$ is not semistable). As $G$ is semistable, the proper rays $r$ and $s$ (which converge to the same end of $\tilde X$) are properly homotopic $rel\{v\}$ in $\tilde X$. Suppose $H:[0,\infty)\times [0,1]$ is a proper  homotopy $rel\{v\}$ of $r$ to $s$ in $\tilde X$.

Lemma \ref{simpA} implies there is a simplicial structure on $[0,\infty)\times [0,1]$  and a proper simplicial homotopy $M:[0,\infty)\times [0,1]\to \tilde X$ of $r$ to $s$.  We  will apply Theorem \ref{excise} to $M$ and then obtain  a proper homotopy of $r$ to $s$ with image in $Y$ (the desired contradiction).


In the hypothesis of Theorem \ref{excise}, let $\tilde X$ play the role of $X$, let $Y_\ast$ play the role of $Y$. Since $C$ is finite, $C$ has infinite index in $A$ (otherwise $A$ is finite and finite groups are all semistable). Let $\{v_1C, v_2C, \ldots \}$ be the $C$-cosets of $A$.  Let $Z_i=Z_{v_i}$ and $\{Z_i\}_{i=1}^\infty$ play the role of $\mathcal Z$ in Theorem \ref{excise}.  By Theorem \ref{excise} there are open subsets $E_j$ for $j\in J$, each homeomorphic to $\mathbb R^2$ and edge paths $\alpha_j$ bounding $E_j$ such that $(E_j,\alpha_j)$ is a disk pair. Since each $Z_i$ is finite,  Remark \ref{Tfacts} guarantees that each edge path $\alpha_j$ is finite. By Theorem \ref{excise}, $M$ restricted to $[0,\infty)\times [0,1]-\cup_{i=1}^\infty E_j$ has image in $Y$ and for each $j$, $M(\alpha_j)$ has image in say $Z_{i(j)}$. Let $M_1$ be the homotopy that agrees with $M$ on $[0,\infty)\times [0,1]-\cup_{i=1}^\infty E_j$ and on $E_j$ is a simplicial homotopy that kills $\alpha_j$ in $Z_{i(j)}$. To complete the proof of Theorem \ref{splitF} we need to show that $M_1$ is proper. 
 
If $K$ is a finite subcomplex in $Y$ and $M_1^{-1}(K)$ is not a finite complex, then there is an unbounded sequence of vertices in $[0,\infty)\times [0,1]$ that are mapped by $M_1$ into $K$. 
Since $M$ is proper, there is an unbounded sequence of vertices in the union of the disks $E_j$ that $M_1$ maps into $K$. Since each $E_j$ contains only finitely many vertices,  there are infinitely many $E_j$ each of which has a vertex that $M_1$ maps to $K$. But Remark \ref{Tfacts} guarantees that only finitely many disks are mapped by $M_1$ to a given $aZ$ for $a\in A$. This means that infinitely many distinct $aZ$ intersect $K$, a contradiction. 
 \end{proof}

\bibliographystyle{amsalpha}
\bibliography{paper1}{}

\def\cprime{$'$}
\providecommand{\bysame}{\leavevmode\hbox to3em{\hrulefill}\thinspace}
\providecommand{\MR}{\relax\ifhmode\unskip\space\fi MR }
\providecommand{\MRhref}[2]{%
  \href{http://www.ams.org/mathscinet-getitem?mr=#1}{#2}
}
\providecommand{\href}[2]{#2}
\begin{thebibliography}{MT92b}

\bibitem[BM91]{BM91}
Mladen Bestvina and Geoffrey Mess, \emph{The boundary of negatively curved
  groups}, J. Amer. Math. Soc. \textbf{4} (1991), no.~3, 469--481. \MR{1096169}

\bibitem[Bow99]{Bow99B}
B.~H. Bowditch, \emph{Connectedness properties of limit sets}, Trans. Amer.
  Math. Soc. \textbf{351} (1999), no.~9, 3673--3686. \MR{1624089}

\bibitem[Bri93]{BR93}
Stephen~G. Brick, \emph{Quasi-isometries and ends of groups}, J. Pure Appl.
  Algebra \textbf{86} (1993), no.~1, 23--33. \MR{1213151}

\bibitem[Geo08]{G}
Ross Geoghegan, \emph{Topological methods in group theory}, Graduate Texts in
  Mathematics, vol. 243, Springer, New York, 2008. \MR{2365352}

\bibitem[Mih]{M7}
Michael~L. Mihalik, \emph{Bounded depth ascending {HNN} extensions and
  $\pi_1$-semistability at infinity}, Arxiv 1709.09140 [Math. GR].

\bibitem[Mih83]{M1}
\bysame, \emph{Semistability at the end of a group extension}, Trans. Amer.
  Math. Soc. \textbf{277} (1983), no.~1, 307--321. \MR{690054}

\bibitem[Mih86]{M4}
\bysame, \emph{Semistability at {$\infty$} of finitely generated groups, and
  solvable groups}, Topology Appl. \textbf{24} (1986), no.~1-3, 259--269,
  Special volume in honor of R. H. Bing (1914--1986). \MR{872498}

\bibitem[Mih87]{M87}
\bysame, \emph{Semistability at {$\infty$}, {$\infty$}-ended groups and group
  cohomology}, Trans. Amer. Math. Soc. \textbf{303} (1987), no.~2, 479--485.
  \MR{902779}

\bibitem[Mih96]{M96}
\bysame, \emph{Semistability of {A}rtin and {C}oxeter groups}, J. Pure Appl.
  Algebra \textbf{111} (1996), no.~1-3, 205--211. \MR{1394352}

\bibitem[Mih16]{M6}
\bysame, \emph{Semistability and simple connectivity at infinity of a finitely
  generated group with a finite series of commensurated subgroups}, Trans.
  Amer. Math. Soc. \textbf{16} (2016), no.~6, 3615--3640, MR3584269 [math.GR].

\bibitem[MS]{MS19}
Michael~L. Mihalik and Eric Swenson, \emph{Relatively hyperbolic groups with
  free abelian second cohomology}, ArXiv 1812.08893v2.

\bibitem[MT92a]{MT92}
Michael~L. Mihalik and Steven~T. Tschantz, \emph{One relator groups are
  semistable at infinity}, Topology \textbf{31} (1992), no.~4, 801--804.
  \MR{1191381}

\bibitem[MT92b]{MT1992}
\bysame, \emph{Semistability of amalgamated products and {HNN}-extensions},
  Mem. Amer. Math. Soc. \textbf{98} (1992), no.~471, vi+86. \MR{1110521}

\bibitem[Spa66]{Span66}
Edwin~H. Spanier, \emph{Algebraic topology}, McGraw-Hill Book Co., New
  York-Toronto, Ont.-London, 1966. \MR{0210112}

\bibitem[Swa96]{Swarup}
G.~A. Swarup, \emph{On the cut point conjecture}, Electron. Res. Announc. Amer.
  Math. Soc. \textbf{2} (1996), no.~2, 98--100. \MR{1412948}

\bibitem[Var]{OV}
Olga Varghese, \emph{On number of ends of graph products of groups},
  arXiv:1911.03198 [math.GR], (to appear in Communications in Algebra).

\end{thebibliography}

\end{document}